\newcommand{\G}{\Gamma}
\newcommand{\bs}{\backslash}
\newcommand{\Aut}{\mathop{\mathrm{Aut}}}
\newcommand{\Stab}{\mathop{\mathrm{Stab}}}
\def\Aut{\mathrm{Aut}}
\def\Cnn{\mathrm{Comm}_{\leq n}(\Gamma,G)}
\def\Cnnp{\mathrm{Comm}_{\leq n}(\Gamma',G)}
\def\Cnnt{\mathrm{Comm}_{\leq tn}(\Gamma,G)}
\def\Cnnpt{\mathrm{Comm}_{\leq tn}(\Gamma',G)}
\def\Cnng{\mathrm{Comm}_{\leq n}(\Gamma^g,G)}
\def\c{c}
\newcommand{\al}{\alpha}
\newcommand{\be}{\beta}
\newcommand{\ga}{\gamma}
\newcommand{\de}{\delta}
\newcommand{\la}{\lambda}
\newcommand{\ze}{Z}
\newcommand{\Ga}{\Gamma}
\newcommand{\De}{\Delta}
\newcommand{\hR}{\widehat{R}}
\newcommand{\fg}{\mathfrak g}
\newcommand{\tg}{\widetilde{g}}
\newcommand{\bA}{\mathbb A}
\newcommand{\bF}{\mathbb F}
\newcommand{\bN}{\mathbb N}
\newcommand{\bQ}{\mathbb Q}
\newcommand{\bR}{\mathbb R}
\newcommand{\bZ}{\mathbb Z}
\newcommand{\bmat}{\left(\begin{matrix}}
\newcommand{\emat}{\end{matrix}\right)}
\newcommand{\hZ}{\widehat{\mathbb Z}}
\newcommand{\A}{\mathbb{A}^f}
\DeclareMathOperator{\Comm}{Comm}
\DeclareMathOperator{\val}{val}
\DeclareMathOperator{\vol}{vol}
\DeclareMathOperator{\GL}{GL}
\DeclareMathOperator{\PGL}{PGL}
\DeclareMathOperator{\M}{M}
\DeclareMathOperator{\Ad}{Ad}
\def\vol {\text{vol}}
\newtheorem{theo}{\bf{Theorem}}[section]
\newtheorem{thm}[theo]{Theorem}
\newtheorem{lem}[theo]{Lemma}
\newtheorem{defi}[theo]{Definition}
\newtheorem{defn}[theo]{Definition}
\newtheorem{prop}[theo]{Proposition}
\newtheorem{prob}[theo]{Problem}
\begin{document}
\title{On Commensurizer Growth}
\author{Nir Avni}
\address{Department of Mathematics, Harvard University, Cambridge, MA}
\author{Seonhee Lim}
\address{Department of Mathematical Sciences, Seoul National University, Seoul 151-747, South Korea}
\email{slim@snu.ac.kr}
\author{Eran Nevo}
\address{Department of Mathematics\\
    Cornell University\\
    Ithaca, NY 14853, USA}
\email{eranevo@math.cornell.edu}
\thanks{The first author was partially supported by NSF Award DMS-0901638. The third author was partially supported by an NSF Award DMS-0757828.}
\maketitle
\begin{abstract}
We study new asymptotic invariant of a pair consisting of a group and a subgroup, which we call Commensurizer growth. We compute the commensurizer growth for several examples, concentrating mainly on the case of a locally compact topological group and a lattice inside it.
\end{abstract}

\section{Introduction}
Consider a group $G$ and a subgroup $A$ of $G$. For an element $g\in G$, we denote the conjugation-by-$g$ map as $x\mapsto x^g=g^{-1}xg$. We say that an element $g\in G$ commensurates $A$ if $A\cap A^g$ has finite index in both $A$ and $A^g$. The set of elements in $G$ that commensurate $A$ is called the commensurability group or the {\it commensurizer} of $A$ in $G$; we denote it by $\Comm(A,G)$.

Classical results on the commensurizer include superrigidity of commensurizer, proved by Margulis \cite{M} for lattices in semisimple Lie groups of rank $>1$ and by Lubotzky-Mozes-Zimmer \cite{LMZ} for tree lattices.
If $A$ is a uniform lattice, i.e. a lattice of compact quotient, the commensurizer is dense in the ambient group $G$ in both cases (when $G$ is the automorphism group of a tree, this was first shown in \cite{BaKu}).

The set $\Comm(A,G)$ is naturally filtered according to the index of $A\cap A^g$ in $A$. More precisely, define the $n$'th commensurizer to be
\[
\Comm_n(A,G)=\{g\in \Comm(A,G)\;|\; [A:A\cap A^g]=n\}.
\]
The normalizer of $A$ in $G$, which we denote by $N_G(A)$, acts on the left on the sets $\Comm_n(A,G)$. We denote the size of the quotient $N_G(A)\bs\Comm_n(A,G)$ by $c_n(A,G)$. By definition, $\Comm_1(A,G)=N_G(A)$, and so $c_1(A,G)=1$. In general, the numbers $c_n(A,G)$ might be infinite, but we will soon restrict to pairs $(A,G)$ for which $c_n(A,G)$ are finite for every $n$. The asymptotic behavior of the sequence $c_n(A,G)$ is what we call the {\it commensurizer growth} of the pair $(A,G)$. We will usually phrase our results using the sequence
\[
c_{\leq n}(A,G)=c_1(A,G)+\ldots+c_n(A,G),
\]
counting the set of elements $g\in \Comm(A,G)$, up to $N_G(A)$, such that the index of $A\cap A^g$ in $A$ is at most $n$.

In this paper we study the commensurizer growth for several classes of pairs of groups, mostly for pairs $(A,G)$ such that $G$ is a topological group and $A$ is a lattice in $G$.
The two main examples are the case of uniform lattices in a Lie group $PGL_2(F)$ over a non-archimedean local field $F$ and the case of uniform lattices in the automorphism group of a tree. Although in both cases the commensurizer is dense in the ambient group, we will show that the commensurizer growths are very different : polynomial v.s. exponential.

The case where $G$ is abelian is trivial. One of the first non-trivial cases among Lie groups is the pair $(H(\bZ),H(\bR))$, where $H$ is the three-dimensional Heisenberg group.

\begin{thm}\label{thm:HeisenbergGrowth}
Let $H$ be the three-dimensional Heisenberg group. Then the sequence $c_{\leq n}(H(\bZ),H(\bR))$ grows cubically. More precisely
\[
\lim_{n\to\infty}\frac{c_{\leq n}(H(\bZ),H(\bR))}{n^3}=\frac{1}{3\zeta(3)}=0.277\ldots
\]
\end{thm}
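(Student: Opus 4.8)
The plan is to make everything completely explicit in coordinates. Write an element of $H(\bR)$ as a triple $(a,b,c)$ for the unipotent upper-triangular matrix with $(1,2)$-entry $a$, $(2,3)$-entry $b$ and $(1,3)$-entry $c$; then the group law is $(a,b,c)(a',b',c')=(a+a',\,b+b',\,c+c'+ab')$ and $\La:=H(\bZ)$ is the subgroup of integer triples. First I would compute that conjugation by $g=(a,b,c)$ acts on $(x_1,x_2,x_3)$ by $(x_1,x_2,x_3)\mapsto(x_1,x_2,\,x_3+bx_1-ax_2)$, so it depends only on $a,b$ and fixes the first two coordinates; in particular $\La^g$ depends only on $a,b$. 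From this, $\La\cap\La^g=\{(x_1,x_2,x_3)\in\bZ^3:bx_1-ax_2\in\bZ\}$, and since the $x_3$-coordinate is unconstrained, $[\La:\La\cap\La^g]$ equals the index in $\bZ^2$ of the kernel of the homomorphism $\bZ^2\to\bR/\bZ$, $(x_1,x_2)\mapsto bx_1-ax_2\bmod\bZ$. That kernel has finite index iff its image, the subgroup $\langle a,b\rangle\le\bR/\bZ$, is finite, i.e. iff $a,b\in\bQ$; and then the index equals $|\langle a,b\rangle|=\operatorname{lcm}(q_1,q_2)$, where $q_1,q_2$ are the reduced denominators of $a,b$. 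Running the same computation for $g^{-1}$ (parameters $-a,-b$) shows $[\La^g:\La\cap\La^g]$ is the same number, so the filtration by $n$ is unambiguous and $\Comm(H(\bZ),H(\bR))=\{(a,b,c):a,b\in\bQ\}$, with $g\in\Comm_n(\La,H(\bR))$ iff $\operatorname{lcm}(q_1,q_2)=n$.

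Next I would identify the normalizer and the relevant quotient. The displayed conjugation formula forces $N_{H(\bR)}(\La)=\{(a,b,c):a,b\in\bZ\}$ (one needs $bx_1-ax_2\in\bZ$ for all $x_1,x_2\in\bZ$). The left action of $(u,v,w)\in N_{H(\bR)}(\La)$ on $(a,b,c)\in\Comm(\La,H(\bR))$ is $(u+a,\,v+b,\,w+c+ub)$, and since $w$ ranges over all of $\bR$ the orbit is exactly $\{(a+u,b+v,\ast):u,v\in\bZ,\ \ast\in\bR\}$. Hence $(a,b,c)\mapsto(a\bmod\bZ,\,b\bmod\bZ)$ identifies $N_{H(\bR)}(\La)\bs\Comm(\La,H(\bR))$ with $(\bQ/\bZ)^2$, and the index function descends to this quotient because $\La^{hg}=\La^g$ for $h\in N_{H(\bR)}(\La)$. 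Under this identification, $c_n(H(\bZ),H(\bR))$ is the number of pairs $(\al,\be)\in(\bQ/\bZ)^2$ with $\operatorname{lcm}(\operatorname{ord}\al,\operatorname{ord}\be)=n$.

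Now the problem is purely arithmetic. Writing $\al=x/n$, $\be=y/n$ with $x,y\in\bZ/n$, the condition $\operatorname{lcm}(\operatorname{ord}\al,\operatorname{ord}\be)=n$ is precisely $\gcd(x,y,n)=1$, so
\[
c_n(H(\bZ),H(\bR))=\#\{(x,y)\in(\bZ/n)^2:\gcd(x,y,n)=1\}=J_2(n)=n^2\prod_{p\mid n}(1-p^{-2}),
\]
the Jordan totient (and $c_1=J_2(1)=1$, as it must). Therefore $c_{\leq n}(H(\bZ),H(\bR))=\sum_{d\le n}J_2(d)$, and to finish I would use $J_2(d)=\sum_{e\mid d}\mu(e)(d/e)^2$ and interchange summations:
\[
\sum_{d\le n}J_2(d)=\sum_{e\le n}\mu(e)\sum_{f\le n/e}f^2=\sum_{e\le n}\mu(e)\left(\tfrac13(n/e)^3+O((n/e)^2)\right)=\frac{n^3}{3}\sum_{e\le n}\frac{\mu(e)}{e^3}+O(n^2).
\]
Since $\sum_{e\ge1}\mu(e)e^{-3}=1/\zeta(3)$, dividing by $n^3$ and letting $n\to\infty$ gives $1/(3\zeta(3))=0.277\ldots$, as claimed.

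Honestly, none of the steps is deep; the part that needs the most care is the first paragraph — correctly computing the conjugation action and deducing from it that the commensurizer is no larger than $\{(a,b,c):a,b\in\bQ\}$ and that the two indices $[\La:\La\cap\La^g]$ and $[\La^g:\La\cap\La^g]$ coincide, so that the sequence $c_n$ is well posed. After that it is bookkeeping together with the standard Dirichlet-series estimate for the summatory function of $J_2$.
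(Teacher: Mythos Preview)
Your argument is correct, and it reaches the same destination as the paper by a more elementary and self-contained route. The paper first invokes the general adelic machinery of Section~\ref{sec:pre}: it uses strong approximation to write $Z_{H(\bZ),H(\bR)}(s)=\prod_p Z_{H(\bZ_p),H(\bQ_p)}(s)$, computes each local factor by parametrizing $N\backslash H(\bQ_p)$ by $(\bQ_p/\bZ_p)^2$ and evaluating $\chi$ via $p$-adic valuations, multiplies to obtain $\zeta(s-2)/\zeta(s)$, and then appeals to a Tauberian theorem for the asymptotic. You instead stay global throughout: you parametrize $N_{H(\bR)}(H(\bZ))\backslash\Comm(H(\bZ),H(\bR))$ directly by $(\bQ/\bZ)^2$, recognize $c_n$ as the Jordan totient $J_2(n)$, and extract the asymptotic by the classical M\"obius-inversion/hyperbola argument, bypassing both the Euler product and the Tauberian step. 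The two computations are of course consistent (the Dirichlet series of $J_2$ is $\zeta(s-2)/\zeta(s)$, and the paper's local value $c_{p^n}=p^{2n}(1-p^{-2})$ is exactly $J_2(p^n)$). What your approach buys is that the Heisenberg case becomes entirely self-contained and elementary; what the paper's approach buys is uniformity, since the same local-global template is reused for the $\PGL_2$ computation in Section~\ref{sec:PGL2}.
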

An even smaller commensurizer growth is obtained by the pair $(\PGL_2(\bZ),\PGL_2(\bR))$, for which the growth is quadratic. More generally, we prove

\begin{thm}\label{thm:PGL2Growth}
Let $F$ be a local field, and let $\Ga$ be an arithmetic lattice in the group $\PGL_2(F)$. Then the sequence $c_{\leq n}(\Ga,\PGL_2(F))$ grows quadratically. More precisely, the sequence $c_{\leq n}(\Ga,\PGL_2(F))/n^2$ is bounded away from $0$ and infinity, as $n$ tends to infinity.
\end{thm}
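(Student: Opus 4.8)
My plan is to reduce $c_{\leq n}(\Gamma,G)$ to a sum of a generalized Dedekind $\psi$-function over ideals, and then estimate that sum; the model to keep in mind is $(\PGL_2(\bZ),\PGL_2(\bR))$, where everything is explicit. First I would use that replacing $\Gamma$ by a commensurable subgroup leaves $\Comm(\Gamma,G)$ unchanged and alters each $c_m(\Gamma,G)$ by at most a bounded factor, so I may take $\Gamma$ to be the standard arithmetic subgroup attached to the defining data: a $k$-group $\mathbf G$ with $\mathbf G(k_{v_0})=\PGL_2(F)$ --- so $\mathbf G=\PGL_2$ or $\mathbf G=B^\times/Z$ for a quaternion algebra $B$ over the global field $k$ --- a place $v_0$, and a finite set $S\ni v_0$ of places, with $\Gamma=\mathbf G(O_{k,S})$ (or the projectivized unit group of a maximal order, in the definite case). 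By the commensurator theorem of Margulis (resp.\ Lubotzky--Mozes--Zimmer in the tree case), $\Comm(\Gamma,G)=\mathbf G(k)$ embedded in $G$ via $v_0$, and any $g\in\mathbf G(k)$ normalizing $\Gamma$ preserves the lattice class in the Bruhat--Tits tree $T_v$ for every finite $v\notin S$, hence lies in $\mathbf G(O_{k,S})$ up to finite index; thus $c_{\leq n}(\Gamma,G)\asymp|\Gamma\backslash\Comm_{\leq n}(\Gamma,G)|$.

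Next I would pass from cosets to ideals. Each double coset $\Gamma g\Gamma$ is a disjoint union of exactly $[\Gamma:\Gamma\cap\Gamma^g]$ left $\Gamma$-cosets, and this integer is constant on the double coset, so $|\Gamma\backslash\Comm_{\leq n}(\Gamma,G)|=\sum[\Gamma:\Gamma\cap\Gamma^g]$, summed over double cosets $\Gamma g\Gamma\subseteq\Comm_{\leq n}(\Gamma,G)$. By strong approximation for the simply connected cover of $\mathbf G$ --- equivalently, classical Hecke theory for $\mathbf G(O_{k,S})$, resp.\ for the quaternion order --- the space $\Gamma\backslash\mathbf G(k)/\Gamma$ is, up to a bounded-to-one correspondence, the set of integral ideals $\mathfrak a$ of $O_{k,S}$ prime to the ramification, and under it $[\Gamma:\Gamma\cap\Gamma^g]=\psi(\mathfrak a):=N\mathfrak a\prod_{\mathfrak p\mid\mathfrak a}(1+N\mathfrak p^{-1})$. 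This last identity is local and geometric: for finite $v\notin S$, $\Gamma\cap\Gamma^g$ is the stabilizer in $\PGL_2(O_v)$ of the vertex $gx_0$ of $T_v$, which lies at some distance $d_v$ from the base vertex $x_0$ and whose $\PGL_2(O_v)$-orbit has size $\psi(\mathfrak p_v^{\,d_v})=(N\mathfrak p_v+1)N\mathfrak p_v^{\,d_v-1}$ for $d_v\geq1$. Therefore $c_{\leq n}(\Gamma,G)\asymp\sum_{\mathfrak a:\,\psi(\mathfrak a)\leq n}\psi(\mathfrak a)$. (For $(\PGL_2(\bZ),\PGL_2(\bR))$ this reads: double cosets $\leftrightarrow$ homothety classes of lattices in $\bQ^2$ $\leftrightarrow$ positive integers $m$, with $[\Gamma:\Gamma\cap\Gamma^g]=\psi(m)$ the ordinary Dedekind function, and $c_{\leq n}=\sum_{\psi(m)\leq n}\psi(m)$.)

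Then I would estimate the sum. For the upper bound, $\psi(\mathfrak a)\geq N\mathfrak a$ and $\#\{\mathfrak a:N\mathfrak a\leq n\}=O(n)$, so $\sum_{\psi(\mathfrak a)\leq n}\psi(\mathfrak a)\leq n\cdot\#\{\mathfrak a:N\mathfrak a\leq n\}=O(n^2)$ (more precisely, $\sum_{\mathfrak a}\psi(\mathfrak a)N\mathfrak a^{-s}=\zeta_k(s)\zeta_k(s-1)/\zeta_k(2s)$ has a simple pole at $s=2$). For the lower bound, I would restrict to squarefree ideals $\mathfrak a$ in a fixed ideal class with $N\mathfrak a\in[n/2K,\,n/K]$ for a large constant $K=K(k)$; since $\psi(\mathfrak a)/N\mathfrak a=\prod_{\mathfrak p\mid\mathfrak a}(1+N\mathfrak p^{-1})$ has bounded mean over squarefree $\mathfrak a$ (Markov's inequality, using $\sum_{\mathfrak p}N\mathfrak p^{-2}<\infty$), a positive proportion of these $\mathfrak a$ satisfy $\psi(\mathfrak a)\leq K\cdot N\mathfrak a\leq n$; as each contributes $\psi(\mathfrak a)\geq N\mathfrak a\gg n$ and there are $\gg n$ of them, the sum is $\gg n^2$. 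Combining, $c_{\leq n}(\Gamma,G)\asymp n^2$.

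The analytic estimates are routine and the reduction is formal; the real work --- and the step I expect to be the main obstacle --- is the middle one: establishing, uniformly over all cases ($\mathbf G=\PGL_2$ versus quaternionic, $B$ indefinite or definite, $F$ archimedean or not, number field or function field, class number one or not), the dictionary between $\Gamma$-double cosets in $\mathbf G(k)$ and ideals under which the commensuration index becomes the Dedekind $\psi$-function, and verifying that the passage to a commensurable $\Gamma$ together with the finitely-many-classes ambiguities cost only bounded multiplicative factors. I would also expect that a Tauberian theorem upgrades this to a genuine asymptotic $c_{\leq n}(\Gamma,G)\sim c_\Gamma\,n^2$, except in the function-field case where the periodicity of $\zeta_k$ forces one to settle for $\asymp n^2$ --- which is presumably why the theorem is stated with $\asymp$ rather than $\sim$.
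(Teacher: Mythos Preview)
Your proposal is correct and follows essentially the same route as the paper: identify $\Comm(\Gamma,G)$ with $\mathbf G(k)$, compute the commensuration index locally via the Bruhat--Tits tree to get $(|w|+1)|w|^{d_v-1}$, globalize, and extract the growth rate analytically. The packaging differs in two places. First, where you parametrize $\Gamma$-double cosets by integral ideals and package the index as the Dedekind $\psi$-function, the paper stays on the adelic side: it compares $\chi_{G(R),G(k)}$ with $\chi_{G(\hR),G(\bA)}$ directly, uses finiteness of $G(\hR)\backslash G(\bA)/G(k)$ to control the class-number ambiguity, and then factors $Z_{G(\hR),G(\bA)}(s)=\prod_w Z_{G(R_w),G(k_w)}(s)$; the local factor $1+\sum_{n\geq1}[(|w|+1)|w|^{n-1}]^{1-s}$ is of course exactly your $\sum_{d\geq 0}\psi(\mathfrak p_w^{\,d})^{1-s}$. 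Second, for the analytic endgame the paper writes the global zeta function as $\zeta_k(s-1)$ times an absolutely convergent Euler product on $\Re s>1$ and invokes a Tauberian theorem once to get both bounds simultaneously, whereas you give separate elementary upper and lower bounds on $\sum_{\psi(\mathfrak a)\leq n}\psi(\mathfrak a)$; your Markov-inequality lower bound is a nice touch that avoids Tauberian altogether and is more robust (and your remark about periodicity in the function-field case is on point --- this, together with the class-number comparison, is why the paper also states only $\asymp n^2$ rather than $\sim c_\Gamma n^2$).
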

It is well known that if $F$ is a non-archimedian local field, then the group $\PGL_2(F)$ acts faithfully and transitively by isometries on a $(q+1)$-regular tree, where $q$ is the size of the residue field of $F$ \cite{S}. Denote the group of isometries of the $d$-regular tree by $\Aut(T_d)$. The action gives an embedding of $\PGL_2(F)$ in $\Aut(T_{q+1})$, which is uniform (i.e. the quotient $\PGL_2(F)\backslash \Aut(T_{q+1})$ is compact). This means that if $\Ga$ is a uniform lattice in $\PGL_2(F)$ (recall that such lattices always exist), then it is also a uniform lattice in $\Aut(T_{q+1})$, thus one can consider the commensurizer growth of $\Ga$ in $\Aut(T_{q+1})$ as well. It turns out that the growth in $\Aut(T_{q+1})$ is much bigger than the growth in $\PGL_2(F)$:
\begin{theo}\label{thm:upper bound uniform lattice}\label{thm:exact_bound}
Let $T$ be a uniform tree, $G=\Aut(T)$ and $\Gamma$ a
uniform lattice in $G$. Assume that every vertex in $T$ has at least $3$ neighbors.

Then there exist positive constants $c_1({\G})$ and $c_2({\G})$ such that
for any $n$ large enough,
$$2^{c_2({\G})n \lg(n)}\leq c_{\leq n}(\Ga, G) \leq 2^{c_1({\G})n \lg(n)}.$$
\end{theo}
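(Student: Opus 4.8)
The plan is to translate the commensurizer count into a counting problem about finite subtrees of the universal cover, and then to estimate that count from above and below by leaf-counting of trees whose degrees are bounded between $3$ and the maximal degree $D$ of $T$. Recall that a uniform lattice $\Ga$ in $G=\Aut(T)$ acts on $T$ with finite quotient graph (of groups), and that after passing to a finite-index torsion-free subgroup one may as well assume $X=\Ga\bs T$ is a finite graph and $\Ga=\pi_1(X)$. An element $g\in\Comm(\Ga,G)$ with $[\Ga:\Ga\cap\Ga^g]=m$ corresponds, up to $N_G(\Ga)$, to a finite covering $X'\to X$ of degree $m$ together with the extra data needed to realize $X'$ as another quotient $\Ga^g\bs T$ — equivalently, $g$ is recorded by the \emph{covering} $\Ga\cap\Ga^g\bs T\to \Ga\bs T$, which is a finite graph $Y$ with a map to $X$ of degree $m$; and the index $[\Ga:\Ga\cap\Ga^g]=n$ we are tracking is this $m$. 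So $c_{\le n}(\Ga,G)$ is comparable (up to a bounded multiplicative constant coming from the finitely many vertex/edge stabilizers and from the choice of $g$ lifting a given $Y$, which is controlled because $g$ is determined up to $N_G(\Ga)$ by where it sends a fixed fundamental domain) to the number of connected coverings $Y\to X$ of degree $\le n$, counted up to the appropriate equivalence.

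The next step is to count such coverings. A degree-$m$ covering of a fixed finite graph $X$ is given by a transitive permutation representation of $\pi_1(X)$ on $m$ points, i.e. by a tuple of permutations in $S_m$ indexed by the (finitely many) generators, modulo conjugation; crudely this is at most $(m!)^{r}\le 2^{O(m\lg m)}$ where $r=\mathrm{rank}(\pi_1(X))$, and summing over $m\le n$ keeps us at $2^{O(n\lg n)}$. That gives the upper bound $c_{\le n}(\Ga,G)\le 2^{c_1(\Ga)n\lg n}$. For the lower bound one must produce genuinely many inequivalent coverings of degree $\le n$: here I would use that $X$ has a vertex (hence, after the covering, many vertices) of degree $\ge 3$, so $\pi_1(X)$ surjects onto a free group $F_k$ with $k\ge 2$ (or at worst, if $X$ is a single loop with vertex groups, one uses the vertex groups to get branching); the number of index-$m$ subgroups of $F_2$ already grows like $m\cdot m!\sim 2^{m\lg m}$ by Hall's formula, and distinct subgroups give inequivalent coverings, hence inequivalent classes in the commensurizer after checking that only boundedly many of them can be identified under $N_G(\Ga)$. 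Summing the bottom end $m\approx n$ gives $c_{\le n}(\Ga,G)\ge 2^{c_2(\Ga)n\lg n}$.

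The main obstacle is the passage between the intrinsic invariant $c_{\le n}(\Ga,G)$ and the covering count: one has to verify that (i) every $g$ commensurating $\Ga$ with small index really does arise from a bounded-degree covering and that the bookkeeping of vertex/edge stabilizers of the graph of groups $\Ga\bs\bs T$ only changes things by a multiplicative constant depending on $\Ga$, and (ii) the left action of $N_G(\Ga)$ collapses at most $[\,\Comm_1 \cap \mathrm{something}\,]$-many — in any case a number bounded independently of $n$ — of these coverings together, so that neither the upper nor the lower exponent is affected. I expect (i) to be the delicate part, because $T$ is only assumed uniform, not regular, so $\Ga\bs T$ is a finite graph of finite groups rather than a genuine finite graph; the clean way around this is to replace $\Ga$ by a torsion-free finite-index subgroup $\Ga_0$ (which exists since $\Ga$ is virtually free, being a lattice on a tree) and to observe that $c_{\le n}(\Ga,G)$ and $c_{\le n}(\Ga_0,G)$ have the same $2^{\Theta(n\lg n)}$ growth, after which all the covering-space arguments above apply verbatim to the finite graph $\Ga_0\bs T$. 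The constants $c_1(\Ga),c_2(\Ga)$ then depend only on $\mathrm{rank}(\pi_1(\Ga_0\bs T))$ and the index $[\Ga:\Ga_0]$.
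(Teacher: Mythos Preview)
Your reduction to a torsion-free lattice and your appeal to Hall's subgroup-counting formula are both correct, and these are exactly the ingredients the paper uses. The gap is in the sentence where you claim that $c_{\le n}(\Ga,G)$ is comparable, \emph{up to a bounded multiplicative constant}, to the number of connected degree-$\le n$ coverings of $X=\Ga\bs T$. This is false, and it is the crux of the proof.

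Concretely: a coset $\Ga g\in\Ga\bs\Comm_m(\Ga,G)$ is \emph{not} determined by the single cover $\De\bs T\to X$ (with $\De=\Ga\cap\Ga^g$). The graph $\De\bs T$ carries \emph{two} covering maps to $X$: one from the inclusion $\De\subset\Ga$, and a second one recording how $g$ identifies $\Ga^g\bs T$ with $\Ga\bs T$. The paper packages this as a ``twin cover'' $(X',x_0,f_1,f_2)$ and proves a bijection between $\Ga\bs\Comm_m$ and isomorphism classes of minimal twin covers of degree~$m$. Your phrase ``$g$ is determined up to $N_G(\Ga)$ by where it sends a fixed fundamental domain'' is literally the data of $f_2$, and the number of such $f_2$ for a fixed $(X',x_0,f_1)$ is not bounded by a constant; it grows with~$m$. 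So the map $\Ga g\mapsto\De$ from commensurizer classes to index-$m$ subgroups can have large fibers, and your upper bound is not justified as stated. In the other direction, for the lower bound, ``distinct subgroups give inequivalent classes in the commensurizer'' presupposes a map from subgroups to commensurizer classes; you only have the map going the other way, and to produce a $g$ with $\Ga\cap\Ga^g=\De$ you must exhibit a second covering $f_2$ making the twin cover minimal.

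The paper's fix is exactly to count twin covers. For the upper bound one bounds the number of choices for $f_1$ and for $f_2$ separately (each by Hall's formula), getting roughly $\bigl(m(m!)^{k-1}\bigr)^2\le 2^{O(m\lg m)}$. For the lower bound one takes $m=p$ prime (so every twin cover is automatically minimal), fixes $f_1$, counts the $\sim p(p!)^{k-1}$ choices of $f_2$, and divides by the crude bound $|\Aut(X')|\le 2^{O(p)}$. The passage from the torsion-free $\Ga'$ to an arbitrary uniform lattice $\Ga$ is then done via Bass--Kulkarni commensurability, exactly as you suggest.
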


We show that, for a general pair of groups, the commensurizer growth can be arbitrarily big:

\begin{theo}\label{thm:manyGrowthFunc}
Let $f:\bN\rightarrow \bN$. Then there exists a pair $\G<G$ of groups   such that $c_{\leq n}(\Ga, G)\geq f(n)$ for all $n \in \bN$.
\end{theo}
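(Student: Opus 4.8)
The plan is to build $G$ as an explicit permutation group (or a group given by an action on a set) in which a conveniently chosen subgroup $\Gamma$ has many commensurating elements, all of which produce the same small index $[\Gamma:\Gamma\cap\Gamma^g]$, so that a single $c_n$ already dominates $f(n)$. The cleanest construction is to take an infinite-index abelian (or elementary) subgroup $A$ of a large symmetric group and arrange that conjugates of $A$ intersect $A$ in a subgroup of index exactly $2$ (or some fixed small $m$) in $A$, while the number of such conjugates, modulo the normalizer $N_G(A)$, is forced to be at least $f(2)$ — or rather at least $f(n)$ at the relevant index $n$.

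Concretely, first I would fix $n$ (I will actually build a \emph{single} pair working for all $n$ simultaneously by taking a direct sum over $n$, see below) and set up the following local model. Let $X_n$ be a countable set partitioned into blocks, let $\Gamma_n$ be generated by a family of commuting "rotations," one per block, and let $G_n$ be generated by $\Gamma_n$ together with a collection of $f(n)$ permutations $g_1,\dots,g_{f(n)}$ each of which moves exactly one block to a new "fresh" block disjoint from everything else, so that $\Gamma_n\cap\Gamma_n^{g_i}$ omits exactly the rotation of that one block and hence has index $n$ in $\Gamma_n$ (choosing the block's rotation to have order $n$). The key points to verify are: (i) $[\Gamma_n:\Gamma_n\cap\Gamma_n^{g_i}]=n$ for each $i$, so each $g_i$ lies in $\Comm_n(\Gamma_n,G_n)$; (ii) the $g_i$ represent distinct classes in $N_{G_n}(\Gamma_n)\backslash\Comm_n(\Gamma_n,G_n)$ — this is where one must ensure the normalizer is small, which is why the $g_i$ should act on disjoint fresh blocks with no symmetry relating them, e.g. by giving the fresh blocks pairwise non-isomorphic "decorations"; and (iii) $c_m(\Gamma_n,G_n)$ is finite for the other values $m$, or at least that nothing forces us out of the regime where the invariant is defined. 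Then I would form $G=\bigoplus_n G_n$ and $\Gamma=\bigoplus_n\Gamma_n$ (restricted direct sums acting on the disjoint union of the $X_n$), and check that $c_n(\Gamma,G)\geq c_n(\Gamma_n,G_n)\geq f(n)$ because any single-coordinate $g_i$ from the $n$-th factor gives index exactly $n$ and the factors don't interact.

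The main obstacle I expect is bookkeeping around the normalizer in step (ii): a priori $N_G(\Gamma)$ could be huge (for instance $\Gamma$ abelian of small rank has an enormous automorphism group realized inside a symmetric group), and if many $g_i$ differed by a normalizing element they would collapse to a single class and the count would fail. The remedy is to rigidify: make $\Gamma$ a direct sum of cyclic groups of \emph{distinct} prime-power orders (so that any automorphism permuting the factors is forced to be the identity on the factor-index set), and decorate each fresh target block with a structure (say a finite graph) that is preserved by $G$ by fiat and that distinguishes the $f(n)$ targets, so that no element of $G$ — in particular no normalizing element — can carry the image block of $g_i$ to that of $g_j$ for $i\neq j$. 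Once the normalizer is pinned down in this way, distinctness of the classes $N_G(\Gamma)g_i$ is immediate, and the rest of the verification is routine. An alternative, slicker route to the same end is to take $G$ to be a restricted wreath product or an HNN-type amalgam engineered so that $\Comm_n(\Gamma,G)/N_G(\Gamma)$ literally indexes a prescribed set of size $f(n)$; I would present the permutation-group version as the primary argument since the index computations there are the most transparent.
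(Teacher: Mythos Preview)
Your approach is genuinely different from the paper's, and while the core idea is sound, the execution has gaps that you yourself flag but do not close.

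The paper's construction is purely algebraic and sidesteps all the bookkeeping you anticipate. For each prime $p$ it takes $A_p\cong\bF_p$ embedded as the line $\{(*,0,\ldots,0)\}$ inside $G_p=\GL_{n_p}(\bF_p)\ltimes\bF_p^{\,n_p}$. Any conjugate $A_p^{\,g}$ is another line in $\bF_p^{\,n_p}$; two distinct lines meet only at $0$, so $[A_p:A_p\cap A_p^{\,g}]$ is either $1$ or $p$. The normalizer is exactly the line stabilizer, so $N_{G_p}(A_p)\backslash G_p$ is the set of lines and $c_p(A_p,G_p)=\frac{p^{n_p}-1}{p-1}-1$. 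One then sets $A=\prod_p A_p$ and $G$ the restricted product of the $G_p$ relative to the $A_p$; by Lemma~\ref{lem:Comm.prod} the counts multiply, and choosing $n_p$ large enough together with Bertrand's postulate gives $c_{\leq n}(A,G)\geq f(n)$. All $c_m$ are automatically finite and no normalizer gymnastics are needed because linear algebra does the work.

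In your construction, the normalizer worry in step~(ii) is actually \emph{easier} than you suggest: if $g_i$ moves the distinguished block to a fresh block $B'_i$ and the $B'_i$ are pairwise disjoint, then $\Gamma^{g_i}$ visibly contains a nontrivial permutation supported on $B'_i$ and none supported on $B'_j$ for $j\neq i$, so $\Gamma^{g_i}\neq\Gamma^{g_j}$ and hence $N_G(\Gamma)g_i\neq N_G(\Gamma)g_j$. No decoration or prime-power rigidification is needed here, and in fact your ``distinct prime-power orders'' remedy is in tension with your earlier requirement that the displaced block carry a rotation of order exactly $n$.

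The real gap is elsewhere: you never pin down $G_n$ beyond ``generated by $\Gamma_n$ and the $g_i$'', and you do not verify that the resulting $c_m(\Gamma_n,G_n)$ are finite for all $m$, nor that the direct-sum assembly behaves as in Lemma~\ref{lem:Comm.prod}. Arbitrary words in the $g_i$ and $\Gamma_n$ can move many blocks in uncontrolled ways, and it is not clear that only finitely many $N_G(\Gamma)$-classes appear at a given index. If one allows some $c_m=\infty$ the theorem becomes vacuous, so this must be checked; the paper avoids the issue entirely by working inside \emph{finite} groups $G_p$, where finiteness is automatic and the restricted-product lemma handles the global assembly cleanly.
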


We may still hope for a positive answer to the following problem.
\begin{prob}
Is there a function $f:\bN\rightarrow \bN$ such that for any
lattice $\G$ in a finitely generated group $G$, its commensurizer growth function satisfies $c_n(\G, G)<f(n)$ for any $n$?
\end{prob}

Outline: in Section \ref{sec:pre} we establish general facts about the commensurizer, to be used throughout the paper. In Section \ref{sec:Heisenberg} we prove Theorem \ref{thm:HeisenbergGrowth}, in Section \ref{sec:PGL2} we prove Theorem \ref{thm:PGL2Growth}, in Section \ref{sec:countRecolorings} we prove Theorem \ref{thm:upper bound uniform lattice} and in Section \ref{sec:arbitraryGrowth} we prove Theorem \ref{thm:manyGrowthFunc}.




\section{Preliminaries}\label{sec:pre}

Let $G$ be a group and $A$ be a subgroup of $G$. For $g\in G$, let $\chi_{A,G}(g)=[A:A\cap A^g]$. We start by showing that the commensurizer growth of a pair $(A,G)$ is unchanged when $A$ is replaced by a finite index subgroup.

\begin{lem}\label{lem:finite groups}
Let $G$ be a finite group. If $[G:B]=t$ and $[G:C]=k$ then $[G:C\cap B]\leq kt$.
\end{lem}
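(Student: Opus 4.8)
The plan is to reduce to the multiplicativity of the index in a tower of subgroups. First I would observe that $B\cap C\le B\le G$, so
\[
[G:B\cap C]=[G:B]\cdot[B:B\cap C]=t\cdot[B:B\cap C],
\]
and it remains to show $[B:B\cap C]\le k=[G:C]$. For this I would exhibit an injection from the (right, say) coset space $B/(B\cap C)$ into $G/C$ by sending $b(B\cap C)\mapsto bC$ for $b\in B$. This map is well defined since $B\cap C\subseteq C$, and it is injective: if $b_1,b_2\in B$ satisfy $b_1C=b_2C$, then $b_1^{-1}b_2\in C$, and also $b_1^{-1}b_2\in B$, hence $b_1^{-1}b_2\in B\cap C$, i.e. $b_1(B\cap C)=b_2(B\cap C)$. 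Therefore $[B:B\cap C]\le[G:C]=k$, and combining with the displayed identity gives $[G:B\cap C]\le tk$.

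Since $G$ is assumed finite, one can also argue purely by counting: from $|BC|\cdot|B\cap C|=|B|\cdot|C|$ and $|BC|\le|G|$ one gets $|B\cap C|\ge|B|\,|C|/|G|$, whence $[G:B\cap C]=|G|/|B\cap C|\le|G|^2/(|B|\,|C|)=[G:B]\cdot[G:C]=tk$. I would probably present the tower-law version, as it does not use finiteness of $G$ (only finiteness of the indices) and so applies verbatim in the lattice setting used later. There is no real obstacle here; the only point needing care is to work with coset spaces rather than quotient groups, since $B\cap C$ need not be normal in $B$ or in $G$.
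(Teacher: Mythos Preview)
Your proof is correct. The paper argues by counting: it considers the multiplication map $B\times C\to G$, shows each fiber has size $|B\cap C|$, and deduces $|B|\,|C|\le |B\cap C|\,|G|$, hence $[C:B\cap C]\le t$ and so $[G:B\cap C]\le kt$. This is exactly your second ``purely by counting'' variant. Your primary argument---the tower law together with the injection $b(B\cap C)\mapsto bC$ from $B/(B\cap C)$ into $G/C$---is a different (and arguably cleaner) route: it never uses finiteness of $G$, only that $[G:C]$ is finite, which, as you note, is precisely what is needed when the lemma is invoked later for infinite groups via Lemma~\ref{lem:normal sgp}.
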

\begin{proof} Consider the map $f: B \times C \to G, f((b,c))=bc.$ We claim that the fibers all have cardinality $|B\cap C|$. For $g=b_0c_0 \in BC$, $f^{-1}(g) = \{ (b,c) : bc =g \}.$ Since $c=b^{-1}g =b^{-1}b_0c_0 \in C$, we have $b \in b_0C$. Therefore $|f^{-1}(g)|=|\{(b,b^{-1}g): b \in B \cap b_0C\}|=|b_0B\cap b_0C|=|B \cap C|$.
It follows that $|B||C|=|B \times C| \leq |B \cap C||Im(f)|\leq|B\cap C||G| = |B \cap C||B|t$, therefore $[C: B \cap C] \leq t$.\end{proof}

The lemma below is well known. For the reader's convenience, we provide a proof.
\begin{lem}\label{lem:normal sgp}
Let $G$ be finitely generated, $[G:C]=t$ and $[G:A]=s$. Then there exists a  normal subgroup $N\triangleleft G$ of finite index such that for any $H<G$ with $[G:H]\leq \rm{max}(t,s)$, $N\subseteq H$ holds. In particular, $N< C\cap A$.
\end{lem}
\begin{proof}
Let $n\in \mathbb{N}$ and assume that $G$ is generated by a set of size $l$. Then $|\hom(G,S_n)|\leq (n!)^l<\infty$, where $\hom(G,S_n)$ is the group of homomorphisms from $G$ to the permutation group on $n$ elements, $S_n$. Let $\psi\in \hom(G, S_n^{|\hom(G,S_n)|})$ be defined by $(\psi(g))_{\rho}=\rho(g)$ for all $g\in G$ and $\rho\in \hom(G,S_n)$. Let $N=\ker(\psi)$ be the kernel of $\psi$. Thus $N$ is a normal subgroup of finite index in $G$.
If $H$ is a subgroup of $G$ of index $n$ then $S_n$ acts on $G/H$ (by permuting the cosets). Let $\rho'\in \hom(G,S_n)$ be the homomorphism defined by $\rho'(g)$ being the permutation acting on $G/H$ by multiplication by $g$. Thus
$H=\{g\in G: \rho'(g)(1H)=1H\} \supseteq N$.
By embedding $S_m\hookrightarrow S_n$ as a subgroup for $1\leq m\leq n$, the same reasoning as above shows that if $[G:H]=m<n$ then $N<H$. For $n=\max(s,t)$ we get $N$ as required.
\end{proof}

The following lemma relates the commensurizer growths of commensurable subgroups $\G, \G'$ of $G$. Let us denote $\Cnn = \underset{l \leq n}{\cup} \Comm_l(\G, G)$.

\begin{prop}\label{prop:IndexGapForCommensurales}
Let $G$ be finitely generated group and $\G'\subseteq \G$ be two subgroups in $G$, where $[\G : \G']=t$. Then for any $n$, $\Cnn\subseteq \Cnnpt$ and also $\Cnnp\subseteq \Cnnt$. Equivalently, for each $g\in G$
\[
\frac{1}{t}\chi_{\Ga,G}(g)\leq\chi_{\Ga',G}(g)\leq t\chi_{\Ga,G}(g)
\]
\end{prop}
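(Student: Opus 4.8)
The plan is to prove the chain of inequalities $\frac1t \chi_{\Ga,G}(g) \le \chi_{\Ga',G}(g) \le t\,\chi_{\Ga,G}(g)$ directly, since the containments $\Cnn\subseteq\Cnnpt$ and $\Cnnp\subseteq\Cnnt$ are just the two inequalities read in index form. Fix $g\in G$ and abbreviate $A=\Ga$, $A'=\Ga'$, $B=A^g$, $B'=(A')^g$, so $[A:A']=[B:B']=t$ and we wish to compare $[A:A\cap B]$ with $[A':A'\cap B']$. The natural arena for the computation is the finite set $A/(A'\cap B')$, or more cleanly the double-coset/fiber counting already used in Lemma~\ref{lem:finite groups}; indeed Lemma~\ref{lem:finite groups} is exactly the tool we need, once we localize the situation to a finite group. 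The main obstacle is that $\chi_{\Ga,G}(g)$ need not be finite a priori, so the first task is to observe that finiteness on one side forces it on the other — and here is where the finite generation of $G$ enters, via Lemma~\ref{lem:normal sgp}.

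First I would dispose of the finiteness issue. If $\chi_{\Ga,G}(g)=[A:A\cap B]<\infty$, then $A\cap B$ has finite index in $A$, hence $A'\cap B' = A'\cap B'\cap(A\cap B)$ has finite index in $A'\cap B'\cap A = A'\cap B'$... more simply, $A\cap B$ has finite index in $A$ and $A'$ has finite index in $A$, so $A'\cap(A\cap B)=A'\cap B'\cap\cdots$; concretely $A'\cap B'\supseteq A'\cap(A\cap B)$ which has finite index in $A'$, so $\chi_{\Ga',G}(g)<\infty$. Symmetrically, if $\chi_{\Ga',G}(g)<\infty$ then $A'\cap B'$ has finite index in $A'$, hence in $A$ (as $[A:A']=t<\infty$), hence $A\cap B\supseteq A'\cap B'$ has finite index in $A$ and $\chi_{\Ga,G}(g)<\infty$. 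So both indices are finite simultaneously, and both inequalities are vacuous when they are infinite; assume from now on they are finite. Then by Lemma~\ref{lem:normal sgp} applied inside $A$ (which is finitely generated, being a finite-index subgroup of the finitely generated $G$, or alternatively pass to a common finite-index normal core), there is a finite-index normal subgroup $N\triangleleft A$ contained in every subgroup of $A$ of index at most $\max([A:A'],[A:A\cap B])$; in particular $N\subseteq A'$ and $N\subseteq A\cap B$.

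Now work in the finite group $Q=A/N$. Write $\bar A' = A'/N$, $\bar C = (A\cap B)/N$, both subgroups of $Q$. By construction $[Q:\bar A']=[A:A']=t$. I claim $\bar A'\cap\bar C = (A'\cap B')/N$: clearly $(A'\cap A\cap B)/N\supseteq (A'\cap B')/N$ since $B'\subseteq B$ and $B'\subseteq A'$ gives $A'\cap B'\subseteq A'\cap B = A'\cap A\cap B$; for the reverse, if $a\in A'\cap A\cap B$ then $a\in A'$ and $a\in B=A^g$, and since $a\in A'$ we have... here one uses that $A'\cap B = A'\cap B'$, which holds because $B' = B\cap A'^{\,?}$ — more carefully, $A'\cap B = A'\cap A^g$; an element of this lies in $A'$, so it lies in $A'^g{}^{\,g^{-1}}$... the clean statement is $A'\cap A^g$ and $(A')^g\cap A^g = (A'\cap A)^g = (A')^g$, and conjugation by $g$ carries $A'\cap A = A'$ to $(A')^g$, so $[A\cap B : A'\cap B] = [A^g\cap A : A^g\cap A'] $ and applying $g^{-1}$-conjugation symmetry one gets $[A\cap B:A'\cap B]\le[A:A']=t$ and $[A\cap B:A\cap B']\le[B:B']=t$, hence $[A\cap B : A'\cap B']\le t$ by a further application of Lemma~\ref{lem:finite groups} inside $A\cap B$... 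Let me instead run Lemma~\ref{lem:finite groups} once in $Q$: with $\bar B:=(A\cap B)/N$ playing the role of "$B$" with index $[Q:\bar B]=[A:A\cap B]=\chi_{\Ga,G}(g)$ and $\bar A'$ with index $t$, Lemma~\ref{lem:finite groups} gives $[\bar B:\bar A'\cap\bar B]\le t$, i.e. $[A\cap B:(A\cap B)\cap A']\le t$. Since $\chi_{\Ga',G}(g)=[A':A'\cap B']=[A':A'\cap(A\cap B)]\cdot[A'\cap(A\cap B):A'\cap B']$ and the second factor is again bounded by $t$ by the same lemma while the first is $\le[A:A\cap B]$ trivially — this overshoots. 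The efficient route: apply Lemma~\ref{lem:finite groups} in $Q$ with the two subgroups $\bar A'$ (index $t$) and $\bar C=(A\cap B)/N$ (index $\chi_{\Ga,G}(g)$) to bound $[Q:\bar A'\cap \bar C]\le t\cdot\chi_{\Ga,G}(g)$, and note $\bar A'\cap\bar C\subseteq (A'\cap B')/N$ — wait, that inclusion is backwards for what we want. I will fix the direction by verifying $A'\cap(A\cap B)=A'\cap B'$ honestly (using $A'\cap A^g = (A'\cap A)^g$ after conjugating, combined with $A'\cap A = A'$), giving $[Q:(A'\cap B')/N] = [Q:\bar A'\cap\bar C]\le t\chi_{\Ga,G}(g)$, hence $\chi_{\Ga',G}(g)=[A':A'\cap B']\le[Q:\bar A'\cap\bar C]\le t\chi_{\Ga,G}(g)$. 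For the lower bound $\chi_{\Ga,G}(g)\le t\,\chi_{\Ga',G}(g)$: since $A\cap B\supseteq A'\cap B'$ and $[A:A']=t$, the index $[A:A\cap B]$ divides into $[A:A'\cap B']=[A:A']\cdot[A':A'\cap B']=t\,\chi_{\Ga',G}(g)$, so $\chi_{\Ga,G}(g)=[A:A\cap B]\le[A:A'\cap B']= t\,\chi_{\Ga',G}(g)$.

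The one genuinely delicate point, which I would write out carefully, is the identity $A'\cap A^g=(A')^g\cap\cdots$ — more precisely that $[A\cap A^g : A'\cap A^g]\le[A:A']$ and $[A\cap A^g:A\cap(A')^g]\le[A:A']$ — because this is where the hypothesis $[\Ga:\Ga']=t$ is converted, via Lemma~\ref{lem:finite groups}, into the bound on $[\,A\cap A^g : (A'\cap A^g)\cap(A\cap(A')^g)\,]=[A\cap A^g:A'\cap(A')^g]$. Everything else is bookkeeping with indices of finite-index subgroups. I expect the write-up to be: (1) reduce to the finite case via Lemma~\ref{lem:normal sgp}; (2) apply Lemma~\ref{lem:finite groups} twice (or once in $A$ and once in $A^g$) to get $[A\cap A^g:A'\cap(A')^g]\le t^2$ is too weak — so instead use the subgroup-of-$A$ form directly to land the linear bound $t$; (3) translate back to $\chi$'s. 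The translation between the two equivalent formulations (the containments of $\mathrm{Comm}$-filtration pieces versus the inequalities on $\chi$) is immediate from the definitions, so I would state it in one line at the end.
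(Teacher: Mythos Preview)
Your lower bound $\chi_{\Ga,G}(g)\le t\,\chi_{\Ga',G}(g)$ is correct and is exactly the paper's ``second containment'' argument. The problem is with your upper bound.

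The central step of your main line is the claimed identity $A'\cap(A\cap B)=A'\cap B'$, i.e.\ $\Ga'\cap\Ga^g=\Ga'\cap(\Ga')^g$. This is false in general: one only has $\Ga'\cap(\Ga')^g\subseteq\Ga'\cap\Ga^g$, and there is no reason an element of $\Ga'\cap\Ga^g$ should land in $\Ga'$ after conjugating by $g^{-1}$. Your parenthetical justification ``$A'\cap A^g=(A'\cap A)^g$'' is simply wrong --- conjugation by $g$ sends $A'\cap A$ to $(A')^g\cap A^g$, not to $A'\cap A^g$. Consequently the chain $[A':A'\cap B']\le[Q:\bar A'\cap\bar C]$ goes in the wrong direction, since $\bar A'\cap\bar C=(A'\cap B)/N\supseteq(A'\cap B')/N$.

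The paper's proof sidesteps this by applying Lemma~\ref{lem:finite groups} first inside $\Ga^g$ (where $\Ga'^g$ has index $t$ and $\Ga\cap\Ga^g$ has index $k$) to get $[\Ga\cap\Ga^g:\Ga\cap\Ga'^g]\le t$, hence $[\Ga:\Ga\cap\Ga'^g]\le nt$; and then a second time inside $\Ga$ (with $\Ga'$ of index $t$ and $\Ga\cap\Ga'^g$ of index $\le nt$) to conclude $[\Ga':\Ga'\cap\Ga'^g]\le nt$, using the genuine identity $\Ga'\cap(\Ga\cap\Ga'^g)=\Ga'\cap\Ga'^g$. Ironically, the route you dismissed as ``too weak'' also works: from $[A\cap B:A'\cap B']\le t^2$ you get $[A:A'\cap B']\le nt^2$, and then dividing by $[A:A']=t$ gives $[A':A'\cap B']\le nt$, which is exactly the bound you want. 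You abandoned the correct argument and kept the incorrect one.
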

\begin{proof}
Let $g\in \Comm_n(\G,G)$, thus $[\G: \G\cap \G^g]=n$, $[\G^g: \G\cap \G^g]=k<\infty$ and recall that $[\G:\G']=t$. Note that also $[\G^g:\G'^g]=t$.
Let $N\triangleleft \G^g$ be as guaranteed in Lemma \ref{lem:normal sgp} w.r.t. $\G'^g$ and $\G\cap \G^g$. If $N<H<K<\G^g$ then $[K:H]=[K/N:H/N]$, thus by Lemma \ref{lem:finite groups} we get $[\G^g:\G\cap \G'^g]\leq kt$, hence $[\G\cap \G^g: \G\cap \G'^g]\leq t$. We get $[\G:\G\cap \G'^g]\leq nt$, also $[\G:\G']=t$, hence by applying Lemmata \ref{lem:finite groups} and \ref{lem:normal sgp} for these groups we obtain for $\G'\cap (\G\cap \G'^g) = \G'\cap \G'^g$ that $[\G':\G'\cap \G'^g]\leq tn$.

The second containment is easy: $[\G': \G'\cap \G'^g]=n$ implies $[\G: \G'\cap \G'^g]= tn$. As $\G'\cap \G'^g \subseteq \G\cap \G^g$ we get $[\G:\G\cap \G^g]\leq tn$.
\end{proof}

\begin{lem} \label{lem:chi.product} Assume that $G$ is a (topologically) finitely generated unimodular group and that $A$ is either an open compact subgroup of $G$ or a lattice in $G$. Then, \begin{enumerate}
\item $\chi(g)=\chi(g^{-1})$.
\item $\chi(gh)\leq\chi(g)\chi(h)$.
\end{enumerate}
\end{lem}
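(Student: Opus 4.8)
The plan is to interpret $\chi(g) = [A : A \cap A^g]$ in terms of an index of measures of certain subgroups, using unimodularity to make the two "directions" symmetric. For part (1), I would first observe that when $A$ is a lattice, $[A : A \cap A^g]$ equals the number of $A$-orbits needed to cover the $A^g$-orbit structure, or more concretely: $[A:A\cap A^g] \cdot \mathrm{covol}(A) $ relates to $\mathrm{covol}(A\cap A^g)$, and similarly $[A^g : A \cap A^g]\cdot \mathrm{covol}(A^g) = \mathrm{covol}(A\cap A^g)$. Since conjugation by $g$ is an automorphism of $G$ preserving Haar measure (as $G$ is unimodular, inner automorphisms preserve Haar measure), $\mathrm{covol}(A^g) = \mathrm{covol}(A)$. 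Hence $[A:A\cap A^g] = [A^g : A \cap A^g]$. Now $\chi(g^{-1}) = [A : A \cap A^{g^{-1}}]$; conjugating the pair $(A \cap A^{g^{-1}}, A)$ by $g$ gives $(A^g \cap A, A^g)$, and conjugation is an isomorphism preserving index, so $[A : A \cap A^{g^{-1}}] = [A^g : A^g \cap A] = [A : A \cap A^g] = \chi(g)$. The open-compact case is the same argument with Haar measure of the (compact, hence finite-measure) subgroups in place of covolume, or even more simply $[A : A \cap A^g] = \mu(A)/\mu(A \cap A^g)$ directly.

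For part (2), I would work with the subgroups $A \cap A^g \cap A^{gh}$ inside $A$. The multiplicativity should follow from the tower
\[
[A : A \cap A^g \cap A^{gh}] = [A : A \cap A^g]\cdot[A\cap A^g : A \cap A^g \cap A^{gh}],
\]
so it suffices to bound $[A\cap A^g : A \cap A^g \cap A^{gh}] \leq \chi(h) = [A : A \cap A^h]$ and to note $\chi(gh) = [A : A \cap A^{gh}] \leq [A : A \cap A^g \cap A^{gh}]$. For the former bound: conjugating by $g^{-1}$ sends $A \cap A^g$ to $A^{g^{-1}} \cap A$, a subgroup of $A$, and sends $A\cap A^g \cap A^{gh}$ to $A^{g^{-1}} \cap A \cap A^h$; so $[A\cap A^g : A\cap A^g \cap A^{gh}] = [A^{g^{-1}}\cap A : A^{g^{-1}} \cap A \cap A^h]$, which is at most $[A : A \cap A^h] = \chi(h)$ since restricting the index of $A\cap A^h$ in $A$ to the subgroup $A^{g^{-1}}\cap A$ can only decrease it (an instance of the elementary fact $[B : B \cap C] \leq [A : C]$ for $B,C \leq A$). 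Combining: $\chi(gh) \leq [A : A \cap A^g]\cdot\chi(h) = \chi(g)\chi(h)$.

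The main thing to get right is the measure-theoretic input in part (1): one must be sure that for a lattice $A$ the finite index $[A : A \cap A^g]$ genuinely coincides with the covolume ratio $\mathrm{covol}(A \cap A^g)/\mathrm{covol}(A)$ — this needs $A \cap A^g$ to again be a lattice (which it is, being finite index in the lattice $A$) — and that unimodularity of $G$ is exactly what forces $\mathrm{covol}(A^g) = \mathrm{covol}(A)$, since for a non-unimodular $G$ conjugation scales Haar measure by the modular function. The open-compact case needs no hypothesis beyond $G$ being unimodular (indeed $\chi(g^{-1}) = \mu(A)/\mu(A\cap A^{g^{-1}})$ and conjugating by $g$ is measure-preserving). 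Part (2) is purely group-theoretic and does not even use unimodularity or finite generation; I expect it to be routine once the index inequalities above are assembled, and the finite-generation hypothesis is presumably there only to invoke earlier lemmas or for uniformity with the rest of the section rather than being essential to this particular statement.
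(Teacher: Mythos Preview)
Your part (1) is essentially the paper's argument: the key equality $[A:A\cap A^g]=[A^g:A\cap A^g]$ follows from $\la(A)=\la(A^g)$ in the open compact case and from $\vol(G/A)=\vol(G/A^g)$ in the lattice case, both consequences of unimodularity, exactly as the paper does it.

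In part (2) there is a slip in the conjugation step. With the paper's convention $X^g=g^{-1}Xg$ one has the right action $(X^a)^b=X^{ab}$, so applying $(\cdot)^{g^{-1}}$ to $A^{gh}$ yields $A^{ghg^{-1}}$, not $A^h$. Consequently your chain $A\supseteq A\cap A^g\supseteq A\cap A^g\cap A^{gh}$ only gives
\[
[A\cap A^g:A\cap A^g\cap A^{gh}]\le[A^g:A^g\cap A^{gh}]=[A:A\cap A^{ghg^{-1}}]=\chi(ghg^{-1}),
\]
which is not $\chi(h)$ in general. The easy fix is to run the same idea through the other triple: use the chain $A\supseteq A\cap A^h\supseteq A\cap A^h\cap A^{gh}$. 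Here $[A:A\cap A^h]=\chi(h)$, and your elementary inequality $[B:B\cap C]\le[D:C]$ (for $B,C\le D$) applied with $D=A^h$, $B=A\cap A^h$, $C=A^h\cap A^{gh}=(A\cap A^g)^h$ gives
\[
[A\cap A^h:A\cap A^h\cap A^{gh}]\le[A^h:(A\cap A^g)^h]=[A:A\cap A^g]=\chi(g),
\]
so $\chi(gh)\le[A:A\cap A^h\cap A^{gh}]\le\chi(h)\chi(g)$ as required.

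Once repaired, your route is actually slightly cleaner than the paper's. The paper also passes through $A^h$ but bounds $[A^h:A\cap A^h\cap A^{gh}]$ by invoking Lemmata~\ref{lem:finite groups} and~\ref{lem:normal sgp} (reducing to a finite quotient), which is precisely where the finite-generation hypothesis is used. Your direct use of the injection $b(B\cap C)\mapsto bC$ bypasses those lemmas entirely, confirming your remark that finite generation is not essential for this statement.
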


\begin{proof} The first claim is equivalent to $[A:A\cap A^g]=[A^g:A\cap A^g]$. If $\la$ is a Haar measure on $G$ then,
for $A$ an open compact subgroup of $G$,
$[A:A\cap A^g]=\la(A)/\la(A\cap A^g)=\la(A^g)/\la(A\cap A^g)=[A^g:A\cap A^g]$.

If $A$ is a lattice in $G$ then the volumes equality $\vol(G/A)=\vol(G/A^g)$ holds, hence
$[A:A\cap A^g]=\vol(G/A)/\vol(G/A\cap A^g)=\vol(G/A^g)/\vol(G/A\cap A^g)=[A^g:A\cap A^g]$.

To prove the second claim, note that the index of $A^h\cap A^{gh}$ in $A^h$ is equal to $\chi(g)$. By the first claim, the index of $A\cap A^h$ in $A^h$ is $\chi(h)$. Therefore, by Lemmata \ref{lem:normal sgp} and \ref{lem:finite groups}, the index of $A\cap A^h\cap A^{gh}$ in $A^h$ is at most $\chi(g)\chi(h)$.
As by part (1) $[A^h:A\cap A^h]=\chi(h)$, we get $[A\cap A^h: A\cap A^h\cap A^{gh}]\leq \chi(g)$, hence $[A: A\cap A^h\cap A^{gh}]\leq\chi(g)\chi(h)$ and part (2) follows.
\end{proof}
Next we discuss the relation between $\Comm_n$ and products.
\begin{lem} \label{lem:Comm.prod}

\begin{enumerate}
\item Let $A\subset G$ and $B\subset H$ be groups. Then $\Comm_n(A\times B,G\times H)=\bigsqcup_a \Comm_a(A,G)\times\Comm_{n/a}(B,H)$.
\item Let $A_i\subset G_i$ be groups, and let $\prod^{'}G_i$ denote the restricted product of the $G_i$'s relative to the $A_i$'s (i.e. the set of elements of which all but finitely many entries are in $A_i$). Then
\[
\Comm_n(\prod A_i,\prod^{'} G_i)=\bigsqcup \prod_i \Comm_{a_i}(A_i,G_i),
\]
where the disjoint union is over the set of sequences $(a_i)\in\bN^\infty$ such that $a_i=1$ for all but finitely many $i$'s and $\prod_i a_i=n$.
\end{enumerate}
\end{lem}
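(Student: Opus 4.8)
The plan is to unwind the definitions and reduce both parts of Lemma \ref{lem:Comm.prod} to the elementary fact that the index of a product subgroup in a product group is the product of the indices. First I would fix notation: for $A \subset G$ and $B \subset H$, observe that $(A\times B)^{(g,h)} = A^g \times B^h$, and hence $(A\times B) \cap (A\times B)^{(g,h)} = (A \cap A^g)\times(B\cap B^h)$. Taking indices gives
\[
\chi_{A\times B, G\times H}\big((g,h)\big) = [A\times B : (A\cap A^g)\times(B\cap B^h)] = [A:A\cap A^g]\cdot[B:B\cap B^h] = \chi_{A,G}(g)\,\chi_{B,H}(h).
\]
So $(g,h) \in \Comm(A\times B, G\times H)$ if and only if both $g \in \Comm(A,G)$ and $h \in \Comm(B,H)$ (one uses here that a product subgroup has finite index in a product iff each factor does), and in that case its $\chi$-value is exactly the product of the two factors' $\chi$-values. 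Sorting the pairs $(g,h)$ by the value $a = \chi_{A,G}(g)$ — which forces $\chi_{B,H}(h) = n/a$, in particular $a \mid n$ — yields the claimed disjoint union $\Comm_n(A\times B, G\times H) = \bigsqcup_a \Comm_a(A,G)\times\Comm_{n/a}(B,H)$, part (1).

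For part (2), the key extra observation is a \emph{finiteness} point: if $(g_i) \in \prod' G_i$, so that $g_i \in A_i$ for all but finitely many $i$, then for those indices $A_i^{g_i} = A_i$, hence $\chi_{A_i,G_i}(g_i) = 1$ and those coordinates contribute nothing to the product. I would first check that conjugation and intersection commute with the (restricted) product just as in part (1): $(\prod A_i)^{(g_i)} = \prod A_i^{g_i}$ (this lies in $\prod' G_i$ precisely because $A_i^{g_i} = A_i$ cofinitely), and $(\prod A_i)\cap(\prod A_i)^{(g_i)} = \prod(A_i \cap A_i^{g_i})$. Therefore $\chi_{\prod A_i, \prod' G_i}\big((g_i)\big) = \prod_i [A_i : A_i \cap A_i^{g_i}] = \prod_i \chi_{A_i,G_i}(g_i)$, a product with only finitely many factors different from $1$, hence well-defined and finite. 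An element $(g_i)$ lies in $\Comm_n$ iff each $g_i \in \Comm(A_i,G_i)$ and $\prod_i \chi_{A_i,G_i}(g_i) = n$; stratifying by the sequence $(a_i) = (\chi_{A_i,G_i}(g_i))$, which ranges over sequences in $\bN^\infty$ that are cofinitely $1$ with $\prod a_i = n$, gives the asserted decomposition.

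I do not expect any serious obstacle here; the statement is essentially bookkeeping. The one place requiring a little care — and the step I would flag as the ``main point'' — is verifying in part (2) that the restricted-product structure is respected: that a conjugate of $\prod A_i$ by an element of $\prod' G_i$ again lies in the restricted product, and correspondingly that only finitely many local indices exceed $1$, so that the infinite product $\prod_i \chi_{A_i,G_i}(g_i)$ is literally a finite product and the equation $\prod a_i = n$ makes sense. Once this is observed, both identities follow from the multiplicativity of index in direct products, applied factor by factor. (One should also note that the groups are not assumed to have any finiteness property for this lemma — Lemmata \ref{lem:finite groups}, \ref{lem:normal sgp} are not needed — since the index of a product in a product factors exactly, with no loss.)
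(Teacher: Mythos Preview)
Your argument is correct. For part (1) you spell out exactly what the paper dismisses as ``clear,'' so there is no difference there. For part (2) there is a mild but genuine difference of approach: you compute directly, observing coordinate by coordinate that $(\prod A_i)^{(g_i)}=\prod A_i^{g_i}$, that cofinitely many factors satisfy $A_i^{g_i}=A_i$, and hence that $\chi_{\prod A_i,\prod' G_i}((g_i))=\prod_i \chi_{A_i,G_i}(g_i)$ is a finite product. The paper instead argues structurally: it sets $G_S=\prod_{i\in S}G_i\times\prod_{i\notin S}A_i$ for each finite index set $S$, applies part (1) (inductively) to get $\Comm_n(\prod A_i,G_S)=\bigsqcup\prod_{i\in S}\Comm_{a_i}(A_i,G_i)$, and then passes to the direct limit using $\prod' G_i=\varinjlim_S G_S$. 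Your ``main point'' --- that only finitely many local indices exceed $1$ --- is exactly the content of the paper's direct-limit reduction, just unpacked by hand; the two proofs are equivalent in substance, with the paper's phrasing slightly more categorical and yours slightly more explicit.
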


\begin{proof} (1) is clear. For every finite set $S$ of indices, let
\[
G_S=\prod_{i\in S} G_i \times \prod_{i\notin S}A_i.
\]
By (1), we have that
\[
\Comm_n(\prod A_i,G_S)=\sqcup \prod_{i\in S}\Comm_{a_i}(A_i,G_i)
\]
where the union is over the sequences $(a_i)\in\bN^S$ such that $\prod a_i=n$.
Since
\[
\prod'G_i = \lim_{\to}G_S,
\]
we get that
\[
\Comm_n(\prod A_i,\prod^{'}G_i)=\lim_{\to}\Comm_n(\prod A_i,G_S),
\]
which implies (2).
\end{proof}

A snazzier way to formulate the last lemma is by using generating functions.

\begin{defn} Let $A\subset G$ be groups. Define the commensurizer zeta function of $(A,G)$ to be
\[
\ze_{A,G}(s)=\sum_n c_n(A,G)\cdot n^{-s}=\sum_{g\in N_G(A)\bs\Comm(A,G)}\chi_{A,G}(g)^{-s}.
\]

If the function $\ze_{A,G}(s)$ converges somewhere, then its domain of convergence is a half plane of the form $\{s|\Re(s)>\al\}$. This $\al$ is called the abscissa of convergence of $\ze_{A,G}(s)$; we denote it by $\alpha_{A,G}$.
\end{defn}

In terms of the last definition, Lemma \ref{lem:Comm.prod}(1) states that
\[
Z_{A\times B,G\times H}(s)=Z_{A,G}(s)Z_{B,H}(s)
\]
and, as $N_{\prod'G_i}\prod A_i$ is the direct limit of the $N_{G_S}\prod A_i$, Lemma \ref{lem:Comm.prod}(2) states that
\[
Z_{\prod A_i,\prod'G_i}(s)=\prod Z_{A_i,G_i}(s).
\]

Such infinite products arise from Adelic groups. Here is an example. Let $\hZ=\prod\bZ_p$ be the pro-finite completion of the integers, and let $\A=\hZ\otimes_\bZ\bQ$ be the ring of finite Adeles. Note that $\hZ\cap\bQ=\bZ$.

\begin{defi} Let $G$ be an algebraic group. We say that $G$ satisfies the strong approximation property if $G(\bQ)$ is dense in $G(\A)$.
\end{defi}

\begin{lem} \label{lem:Z.to.Adeles} Let $G$ be an algebraic group defined over $\bQ$ that satisfies the strong approximation property. Then
$$c_{n}(G(\bZ),G(\bQ))=c_{n}(G(\hZ),G(\A)).$$
\end{lem}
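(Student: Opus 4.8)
The plan is to reduce the statement $c_n(G(\bZ),G(\bQ))=c_n(G(\hZ),G(\A))$ to the identity of sets $N_{G(\bQ)}(G(\bZ))\bs\Comm_n(G(\bZ),G(\bQ)) = N_{G(\A)}(G(\hZ))\bs\Comm_n(G(\hZ),G(\A))$, via the inclusion $G(\bQ)\hookrightarrow G(\A)$. The key input is strong approximation: $G(\bQ)$ is dense in $G(\A)$, and $G(\hZ)$ is an open (compact) subgroup of $G(\A)$, so $G(\bQ)\cdot G(\hZ) = G(\A)$. First I would record the basic compatibility: since $G(\hZ)\cap G(\bQ)=G(\bZ)$ (this follows from $\hZ\cap\bQ=\bZ$ applied coordinate-wise to the defining equations of $G$), for $g\in G(\bQ)$ one has $G(\hZ)^g \cap G(\bQ) = G(\bZ)^g$ (conjugation by a rational element commutes with intersecting against $G(\bQ)$), hence $G(\hZ)\cap G(\hZ)^g\cap G(\bQ) = G(\bZ)\cap G(\bZ)^g$. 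The crux is then to show that this rational intersection already ''sees'' the full adelic index, i.e. that
\[
[G(\hZ):G(\hZ)\cap G(\hZ)^g] = [G(\bZ):G(\bZ)\cap G(\bZ)^g]
\]
for $g\in G(\bQ)$. For this I would use that $G(\bZ)$ is dense in $G(\hZ)$ — which again comes from strong approximation (intersecting the dense subgroup $G(\bQ)$ of $G(\A)$ with the open subgroup $G(\hZ)$ gives a dense subgroup of $G(\hZ)$, namely $G(\bZ)$). Density of $G(\bZ)$ in the compact group $G(\hZ)$, together with the fact that $G(\hZ)\cap G(\hZ)^g$ is open in $G(\hZ)$ for $g\in G(\bQ)\cap\Comm(G(\hZ),G(\A))$, forces $G(\bZ)$ to surject onto the finite coset space $G(\hZ)/(G(\hZ)\cap G(\hZ)^g)$, giving the displayed equality of indices. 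In particular $g\in\Comm_n(G(\bZ),G(\bQ))$ if and only if $g\in\Comm_n(G(\hZ),G(\A))$, so $\Comm_n(G(\bZ),G(\bQ)) = \Comm_n(G(\hZ),G(\A))\cap G(\bQ)$; and by strong approximation every adelic commensurating class meets $G(\bQ)$, because $\Comm(G(\hZ),G(\A))$ is open (it contains $G(\hZ)$ as a finite-index–preserving subgroup, in fact $\chi$ is locally constant) and hence dense-in-itself enough to intersect the dense set $G(\bQ)$ — more precisely each double coset $N_{G(\A)}(G(\hZ))\, g\, $ with $g$ adelic contains a rational point, since $g\cdot G(\hZ)$ is open and meets $G(\bQ)$, and translating a commensurating element by an element of $G(\hZ)\subseteq N$ does not change the double-coset class.

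It remains to match the normalizer quotients. Here I would check $N_{G(\bQ)}(G(\bZ)) = N_{G(\A)}(G(\hZ))\cap G(\bQ)$: the inclusion $\subseteq$ needs that $g\in G(\bQ)$ normalizing $G(\bZ)$ also normalizes its closure $G(\hZ)$, which is immediate since conjugation by $g$ is a homeomorphism of $G(\A)$ fixing $G(\bZ)$ setwise, hence fixing $\overline{G(\bZ)}=G(\hZ)$; the inclusion $\supseteq$ is the density argument run backwards — if $g\in G(\bQ)$ normalizes $G(\hZ)$ then $G(\bZ)^g = G(\hZ)^g\cap G(\bQ) = G(\hZ)\cap G(\bQ)=G(\bZ)$. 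Combining, the natural map
\[
N_{G(\bQ)}(G(\bZ))\bs\Comm_n(G(\bZ),G(\bQ)) \longrightarrow N_{G(\A)}(G(\hZ))\bs\Comm_n(G(\hZ),G(\A))
\]
is well-defined, injective (two rational commensurating elements in the same adelic $N$-coset differ by $n_0\in N_{G(\A)}(G(\hZ))$ with $n_0\in G(\bQ)$, hence $n_0\in N_{G(\bQ)}(G(\bZ))$), and surjective by the above. Taking cardinalities gives $c_n(G(\bZ),G(\bQ)) = c_n(G(\hZ),G(\A))$.

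The main obstacle is the index-comparison step: showing that passing from $G(\hZ)$ to the rational points $G(\bZ)$ does not shrink the relevant index, i.e. that $[G(\hZ):G(\hZ)\cap G(\hZ)^g]$ is computed correctly ''at the finite level.'' This is where strong approximation is doing real work — without density of $G(\bZ)$ in $G(\hZ)$ the local obstructions (finite primes where $G(\bZ_p)$ is conjugated nontrivially) could fail to be realized by a single rational conjugator, or the rational intersection could be strictly larger than expected. A secondary subtlety is verifying that $\Comm(G(\hZ),G(\A))$ and its $\Comm_n$-strata are open (so that each adelic commensuration class contains a rational representative); this follows because $G(\hZ)$ is open compact, so $\chi_{G(\hZ),G(\A)}$ is locally constant on the commensurizer, but it should be stated carefully using Lemma \ref{lem:chi.product} and the open-compact hypothesis there.
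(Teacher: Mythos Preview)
Your proposal is correct and follows essentially the same approach as the paper: both arguments hinge on the density of $G(\bQ)$ in $G(\A)$ (equivalently, of $G(\bZ)$ in $G(\hZ)$) together with the openness of $G(\hZ)\cap G(\hZ)^g$ to match indices, then use openness of $\widehat{N}$ to produce rational representatives and the identity $G(\hZ)\cap G(\bQ)=G(\bZ)$ to compare normalizers. Your phrasing via closures and surjection onto the finite coset space is a mild repackaging of the paper's direct coset-representative argument, but the content is the same.
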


\begin{proof} Let $g$ be a rational matrix. Let us see that $[G(\hZ):G(\hZ)\cap G(\hZ)^g]=[G(\bZ):G(\bZ)\cap G(\bZ)^g]$. Indeed, every coset of $G(\hZ)\cap G(\hZ)^g$ is open and hence contains a rational matrix $a$, but since $a\in G(\hZ)$, it follows that $a\in G(\bZ)$. In addition, if $a,b\in G(\bZ)$ are representatives for different $G(\bZ)\cap G(\bZ)^g$ cosets, and assume that the cosets $aG(\hZ)\cap G(\hZ)^g$ and $bG(\hZ)\cap G(\hZ)^g$ intersect, then their intersection contains a rational matrix, which has to be integral, a contradiction.

Let $N=N_{G(\bQ)}G(\bZ)$ and $\widehat{N}=N_{G(\A)}G(\hZ)$. Every coset $g \widehat{N}$ of $\widehat{N}$ where $g \in \Comm_n(G(\hZ), G(\A))$ contains a rational matrix, since $\widehat{N}$ is open in $G(\A)$. Given a rational matrix $g$ in $\Comm_n(G(\hZ),G(\A))$, the argument above shows that $g\in\Comm_n(G(\bZ),G(\bQ))$. Finally, if $g\in G(\bQ)\cap\widehat{N}$, then for every $h\in G(\bZ)$, the conjugation $g^h=h^{-1}gh$ is both in $G(\bQ)$ and in $G(\hZ)$. Therefore $g\in N$. All this means that $gN\mapsto g\widehat{N}$ is a well-defined bijection between $N \backslash \Comm_n(G(\bZ),G(\bQ))$ and $\widehat{N}\backslash \Comm_n(G(\hZ),G(\A))$.
\end{proof}

Using Lemma \ref{lem:Z.to.Adeles} and Lemma \ref{lem:Comm.prod}, we get

\begin{prop} \label{prop:Euler.product} Let $G$ be an algebraic group that satisfies the strong approximation property. Then
\[
Z_{G(\bZ),G(\bQ)}(s)=\prod_p Z_{G(\bZ_p),G(\bQ_p)}(s).
\]
\end{prop}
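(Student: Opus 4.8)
The plan is to deduce Proposition \ref{prop:Euler.product} from the two tools just developed. First I would invoke Lemma \ref{lem:Z.to.Adeles}: since $G$ satisfies the strong approximation property, it gives $c_n(G(\bZ),G(\bQ))=c_n(G(\hZ),G(\A))$ for every $n$, hence the two zeta functions coincide termwise:
\[
Z_{G(\bZ),G(\bQ)}(s)=Z_{G(\hZ),G(\A)}(s).
\]
So it suffices to factor the right-hand side as an Euler product over the primes.

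Next I would recognize $G(\A)$ as the restricted product $\prod'_p G(\bQ_p)$ of the groups $G(\bQ_p)$ relative to the compact open subgroups $G(\bZ_p)$, and $G(\hZ)=\prod_p G(\bZ_p)$ as the corresponding product of lattices (here, compact open subgroups). This is exactly the setup of Lemma \ref{lem:Comm.prod}(2) with $A_i=G(\bZ_i):=G(\bZ_{p_i})$ and $G_i=G(\bQ_{p_i})$. Applying that lemma — or, more conveniently, its generating-function reformulation given in the remark following Lemma \ref{lem:Comm.prod}, namely $Z_{\prod A_i,\prod'G_i}(s)=\prod_i Z_{A_i,G_i}(s)$, which uses that the normalizer of $\prod A_i$ in $\prod'G_i$ is the direct limit of the finite-level normalizers — yields
\[
Z_{G(\hZ),G(\A)}(s)=\prod_p Z_{G(\bZ_p),G(\bQ_p)}(s).
\]
Combining the two displayed equalities gives the claim.

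There is essentially no serious obstacle here; the proposition is a formal consequence of the preceding results, and the work lies entirely in checking that the hypotheses of Lemmata \ref{lem:Z.to.Adeles} and \ref{lem:Comm.prod} are met. The one point that deserves a sentence of justification is the identification of the adelic objects with the restricted/direct products appearing in Lemma \ref{lem:Comm.prod}(2): that $G(\A)=\varinjlim_S G_S$ where $G_S=\prod_{p\in S}G(\bQ_p)\times\prod_{p\notin S}G(\bZ_p)$, and correspondingly that $\Comm_n$ and the normalizer pass to the direct limit. Both are immediate from the definition of the restricted product topology and the fact that each $g\in\Comm_n(G(\hZ),G(\A))$ has almost all coordinates in $G(\bZ_p)$, so it lies in some $G_S$. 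I would therefore present the proof in three short lines: apply Lemma \ref{lem:Z.to.Adeles}, rewrite $G(\A)$ and $G(\hZ)$ as the relevant products, and apply the Euler-product form of Lemma \ref{lem:Comm.prod}(2).
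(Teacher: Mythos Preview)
Your proposal is correct and matches the paper's own argument exactly: the proposition is stated as an immediate consequence of Lemma~\ref{lem:Z.to.Adeles} together with (the generating-function form of) Lemma~\ref{lem:Comm.prod}. There is nothing to add.
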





\section{The Heisenberg Group}\label{sec:Heisenberg}
Let $G$ be the three-dimensional Heisenberg group. Recall that as sets $G(\bZ)=\bZ^2\times\bZ$, and the conjugation is
\[
(u,\be)^{-1} (v,\al) (u,\be)=(v,\al+B(u,v)),
\]
where $B$ is the symplectic bilinear form corresponding to the matrix $\bmat 0&1\\-1&0\emat$.

The group $G$ satisfies the strong approximation property by the strong approximation theorem (\cite{P}, section 7.4). Moreover, the commensurizer of $G(\bZ)$ in $G(\bR)$ is equal to $\Comm(G(\bZ),G(\bQ))$ times the center of $G(\bR)$. Consequently, $$N_{G(\bR)}G(\bZ)\bs\Comm(G(\bZ),G(\bR))= N_{G(\bQ)}G(\bZ)\bs\Comm(G(\bZ),G(\bQ)),$$ thus $Z_{G(\bZ),G(\bR)}=\prod_p Z_{G(\bZ_p),G(\bQ_p)}$ by Proposition \ref{prop:Euler.product}.

Fix $p$, and let $(a,b)\in\bZ_p^2$ such that $\val(a)\leq\val(b)$. Then there is $\al\in\bZ_p$ such that $b=a\al$, and so $B((x,y),(a,b))=xb-ya=a(x\al+y)\in\bZ_p$ if and only if $x\al+y\in a^{-1}\bZ_p$. If $\val(a)\geq0$ then this is satisfied for all $(x,y)\in\bZ_p^2$. If $\val(a)<0$, then for every $x$, the set of $y$'s for which this holds has measure $p^{\val(a)}$. Hence
\[
\mu(G(\bZ_p)\cap G(\bZ_p)^{((a,b),\al)})=\mu\{(x,y)\in\bZ_p^2|B((x,y),(a,b))\in\bZ_p\}=\min\{p^{val(a)},p^{\val(b)},1\}
\]
and so
\[
[G(\bZ_p):G(\bZ_p)\cap G(\bZ_p)^{((a,b),\al)}]=\max\{p^{-\val(a)},p^{-\val(b)},1\}.
\]

We have that $N=N_{G(\bQ_p)}G(\bZ_p)$ is the product of $G(\bZ_p)$ and the center of $G(\bQ_p)$. Therefore, the set $N \backslash G(\bQ_p)$ is parameterized by $(\bQ_p/\bZ_p)^2$, where to the point $(a,b)\in(\bQ_p/\bZ_p)^2$ corresponds the coset
\[
\left\{\bmat1&x&z\\&1&y\\&&1\emat : x=a,y=b\textrm{ (mod $\bZ_p$)}\right\}.
\]

\[
Z_{G(\bZ_p),G(\bQ_p)}(s)=\sum_{a,b\in\bQ_p/\bZ_p}\left(\max\{p^{-\val(a)},p^{-val(b)}\}\right)^{-s}=
\]
\[
=\sum_{m,n\in\bN}|\{a\in\bQ_p/\bZ_p|\val(a)=m\}||\{a\in\bQ_p/\bZ_p|\val(a)=n\}|\left(\max\{p^m,p^n\}\right)^{-s}.
\]
Dividing the sum to $\{(0,0)\}$, the set $\{(m,0)\}_{m\geq1}\cup\{(0,n)\}_{n\geq1}$, the set $\{(m,m)\}_{m\geq1}$, and the set $\{(m,n)|1\leq m<n\}\cup\{(m,n)|1\leq n<m\}$, we get that
\begin{align*}
Z_{G(\bZ_p),G(\bQ_p)}(s)= &1+2\sum_{n=1}^\infty(p-1)p^{n-1}p^{-ns}+\sum_{m=1}^\infty(p-1)^2p^{2m-2}p^{-ms}\\
&+2\sum_{m=1}^\infty\sum_{n=m+1}^\infty(p-1)^2p^{m+n-2}p^{-ns}\\
=&1+2\frac{p-1}{p}\frac{p^{1-s}}{1-p^{1-s}}+\frac{(p-1)^2}{p^2}\frac{p^{2-s}}{1-p^{2-s}}+2\frac{(p-1)^2}{p^2}\frac{p^{1-s}}{1-p^{1-s}}\frac{p^{2-s}}{1-p^{2-s}}\\
=&\frac{1-p^{-s}}{1-p^{2-s}}
\end{align*}
There are two consequences to the computation above:
\begin{itemize}
\item
$c_{p^n}(G(\bZ_p),G(\bQ_p))= p^{2n}(1-p^{-2})$.
\item $\ze_{G(\bZ),G(\bR)}(s)=\ze_{G(\bZ),G(\bQ)}=
\prod_p Z_{G(\bZ_p),G(\bQ_p)}=
\zeta(s-2)/\zeta(s)$, where $\zeta(s)$ is the Riemann zeta function.

Using standard Tauberian theorems (see, for example \cite[Theorem 4.20]{dSG}), we get
\[
\lim_{n\to\infty}\frac{c_{\leq n}(G(\bZ),G(\bR))}{n^{3}}=\frac{1}{3\zeta(3)},
\]
proving Theorem \ref{thm:HeisenbergGrowth}.
\end{itemize}




\section{Arithmetic Lattices in $\PGL_2$}\label{sec:PGL2}

Let $F$ be a local field. We recall the construction of arithmetic lattices in $\PGL_2(F)$ (see \cite{P} for the general construction). Any such lattice is determined by a global field, $k$, and a form, $G$, of $\PGL_2$ over $k$. The global field $k$ is, by definition, either a finite extension of the field of rational numbers or the function field of an algebraic curve over a finite field; it should have a valuation, $v$, such that the completion of $k$ with respect to $v$, which we denote by $k_v$, is equal to $F$. Fix such a valuation $v$, and let $R$ be the ring of $v$-integers,
\[
R=\{x\in k | \textrm{$\val_w(x)\geq0$ for all non-archimedian valuations $w$ different from $v$}\}.
\]
Denote the pro-finite completion of $R$ by $\hR$; it is the product of all completions $R_w$ of $R$, where $w$ is taken from the non-archimedian valuations of $k$ that are different from $v$. Finally, let $\bA$ be the restricted product of the completions $k_w$ relative to $R_w$, taken over all non-archimedian valuations of $k$ that are different from $v$.

The form $G$ must be either the algebraic group $\PGL_2$, or the group $\PGL_1(D)$, where $D$ is a quaternion algebra over $k$. Since we are interested in lattices of $\PGL_2(F)$, we assume that $G$ splits at $v$---i.e. that $G(F)$ is isomorphic to $\PGL_2(F)$---and that $G$ does not split over all archimedian primes different from $v$. It is well known that, under these assumptions, $G(R)$ is a lattice in $G(k_v)=\PGL_2(F)$. If $G=\PGL_1(D)$, then this lattice is uniform. The group $G(R)$ is the arithmetic lattice\footnote{More accurately, an arithmetic lattice is a subgroup of $G(k_v)$ that is commensurable to $G(R)$. However, by Proposition \ref{prop:IndexGapForCommensurales}, the commensurizer growth does not change after passing to a commensurable group.}.

The following is well-known. For the reader's convenience, we supply a proof.
\begin{lem} The commensurizer of $G(R)$ in $G(k_v)$ is $G(k)$.
\end{lem}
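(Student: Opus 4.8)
The plan is to prove the two inclusions $G(k)\subseteq\Comm(G(R),G(k_v))$ and $\Comm(G(R),G(k_v))\subseteq G(k)$ separately, using the adelic picture supplied by the strong approximation property together with the description of $G(R)$ as $G(k)\cap G(\widehat R)$ inside $G(\bA)$.

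\textbf{The easy inclusion.} First I would show $G(k)\subseteq\Comm(G(R),G(k_v))$. Fix $g\in G(k)$. The point is that $G(\widehat R)$ and $G(\widehat R)^g$ are both compact open subgroups of the group $G(\bA)$ of adeles away from $v$: openness and compactness of $G(\widehat R)$ are standard, and conjugation by the element $g\in G(k)\subseteq G(\bA)$ is a homeomorphism of $G(\bA)$, so $G(\widehat R)^g$ is again compact open. Hence $G(\widehat R)\cap G(\widehat R)^g$ is open in each, so of finite index in each. Intersecting with $G(k)$ and using $G(k)\cap G(\widehat R)=G(R)$ (the analogue of $\hZ\cap\bQ=\bZ$ in this setting, which follows since an element of $k$ that is a $w$-integer for all non-archimedean $w\ne v$ lies in $R$ by definition), one gets $[G(R):G(R)\cap G(R)^g]<\infty$ and symmetrically for $G(R)^g$. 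The finiteness of these indices is exactly the kind of ``rational points see the same index as adelic points'' statement proved in Lemma \ref{lem:Z.to.Adeles}, and the same argument applies verbatim here. So $g$ commensurates $G(R)$.

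\textbf{The hard inclusion.} The main obstacle is the reverse inclusion $\Comm(G(R),G(k_v))\subseteq G(k)$, i.e. showing that an element of $G(k_v)$ that is \emph{not} rational cannot commensurate $G(R)$. The cleanest route uses the action of $G(k_v)=\PGL_2(F)$ on its Bruhat--Tits tree $T$ (for $F$ non-archimedean; for $F$ archimedean $\PGL_2(\bR)$ or $\PGL_2(\bC)$ acts on the symmetric space, and one argues similarly). The lattice $G(R)$ acts on $T$ with quotient a finite graph (or finite volume), and the key fact is that $\Comm(G(R),\Aut(T))$ is precisely the group of tree automorphisms that normalize some finite-index subgroup ``up to commensurability'', which by the Bass--Kulkarni / Lubotzky--Mozes--Zimmer rigidity theory for tree lattices equals $G(k)$ when $G(R)$ is arithmetic. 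More self-containedly: if $g$ commensurates $G(R)$, then $G(R)$ and $G(R)^g$ share a common finite-index subgroup $\Delta$, hence $g$ normalizes the commensurability class, so $g$ lies in the commensurizer of $G(R)$ in $\PGL_2(F)$; one then invokes the Borel density / arithmeticity characterization that this commensurizer is the $k$-rational points $G(k)$. Concretely one shows: (i) $\Delta=G(R)\cap G(R)^g$ is Zariski-dense in $G$ (Borel density theorem, since $\Delta$ is still a lattice), (ii) $g$ normalizes $\Delta$ up to finite index, so $\Ad(g)$ preserves the $\bQ$- (or $k$-) structure on the Lie algebra determined by $\Delta$, forcing $g$ to be $k$-rational in $G=\PGL_2$, the adjoint group. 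This last rigidity step — deducing $k$-rationality of $g$ from its preserving the arithmetic structure — is where all the content lies, and it is precisely Margulis's commensurizer criterion specialized to $\PGL_2$; since the lemma is stated as ``well-known'', I would cite \cite{M} (or \cite{LMZ} in the tree case, or \cite{BaKu}) and give the density-plus-normalization argument above as the proof, noting that together with the easy inclusion this establishes $\Comm(G(R),G(k_v))=G(k)$.
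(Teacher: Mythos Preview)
Your treatment of the easy inclusion $G(k)\subseteq\Comm(G(R),G(k_v))$ via compact open subgroups of $G(\bA)$ is fine and more explicit than the paper, which does not spell this direction out.

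For the hard inclusion, your first proposed route---through the Bruhat--Tits tree and ``Bass--Kulkarni / Lubotzky--Mozes--Zimmer rigidity''---is mistaken. Those results concern the commensurizer of a tree lattice inside $\Aut(T)$, and that commensurizer is \emph{dense} in $\Aut(T)$, hence far larger than $G(k)$; indeed the contrast between $\Comm(\Ga,\PGL_2(F))$ and $\Comm(\Ga,\Aut(T))$ is exactly the point of Theorem~\ref{thm:exact_bound} in the paper. Invoking tree-lattice results here would push in the wrong direction. Likewise, Margulis's theorems about commensurizers (superrigidity, arithmeticity) require higher rank and are not the relevant statement; what is needed is the elementary fact, valid in all ranks, that the commensurizer of an arithmetic group equals the group of rational points.

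Your ``more self-contained'' sketch---$\Delta$ Zariski dense by Borel density, then $\Ad(g)$ preserves the $k$-structure on the Lie algebra, hence $g\in G(k)$ because $G$ is adjoint---is the correct skeleton and is precisely what the paper does. The paper, however, carries it out concretely rather than by citation: working with $G=\PGL_1(D)$, it identifies $\fg(k_v)$ with the trace-zero quaternions $D^1$, lifts $\Delta$ to a finite-index subgroup $\widetilde\Delta\subset D(R)^\times$ (which therefore spans $D(k)$), and shows directly that for $\delta\in\Delta$ the conjugate $\widetilde\delta^{\tg}$ lies in $D(R)^\times\cdot k_v^\times I$; taking traces forces $\widetilde\delta^{\tg}\in D(k)$, so $\Ad(g)$ preserves $D^1(k)$. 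Since $\Ad$ is an isomorphism onto $\Aut(\fg)$, this gives $g\in G(k)$. Your outline would be accepted as essentially the same argument once you drop the tree-rigidity detour and supply the concrete step (spanning plus trace, or Borel density plus adjointness) in place of a bare citation.
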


\begin{proof} We prove the claim for $G=\PGL_1(D)$; the case $G=\PGL_2$ can be proved similarly. We first make a more precise statement. Let $\fg(k_v)$ be the Lie algebra of $G(k_v)$. We can identify $\fg(k_v)$ with the subspace $D^1\subset D(k_v)=\M_2(k_v)$ of elements with 0 trace. Note that $D^1$ is defined over $k$. Pick a basis of $D^1$ consisting of elements of $D(k)$, and denote the $k$-span of this basis by $D^1(k)$. The homomorphism $G(k_v)\to \Aut(\fg(k_v))$ given by $g\mapsto\Ad(g)$ (which is called the adjoint representation) is an isomorphism, as the kernel is the center of the group which is trivial in this case (see Appendix of \cite{Mo} for example). The claim in the proposition is that the image of $\Comm(G(R),G(k_v))$ is the set  of endomorphisms that preserve $D^1(k)$.

Indeed, let $g\in G(k_v)=\PGL_2(k_v)$ be in the commensurizer, and let $\De=G(R)\cap G(R)^g$. Denote the pre-image of $\De$ in $D(R)^\times$ by $\widetilde{\De}$. Since $\De$ is of finite index in $G(R)$, we get that $\widetilde{\De}$ is of finite index in $D(R)^\times$, and, in particular, that it spans $D(k)$. For every $\de\in\De$ choose lifts $\tg\in\GL_2(k_v)$ of $g$ and $\widetilde{\de}\in D(R)$ of $\de$. By assumption, $\widetilde{\de}^{\tg} \in D(R)^\times\cdot(k_v^\times I)$. Taking traces of both sides, we get that $\widetilde{\de}^{\tg} \in D(k)^\times$. Since $\widetilde{\de}$ was arbitrary and $\widetilde{\De}$ spans, we get that $\Ad(\tg)$ preserves $D(k)$ and therefore that $\Ad(g)$ preserves $D^1(k)$.
\end{proof}

For the proofs that follow, we need to slightly extend the notation. If $A\subset G$ are groups and $X\subset G$ is any subset containing $N_G(A)$, denote the collection of elements $g\in X$ such that $[A:A\cap A^g]=n$ by $\Comm_n(A,X)$. Similarly, one can define $\chi_{A,X}(g)$, $Z_{A,X}(s)$, and $\al_{A,X}$.

\begin{lem} \label{lem:R.k.hR.bA} For every $g\in G(k)$, $\chi_{G(R),G(k)}(g)=\chi_{G(\hR),G(\bA)}(g)$.
\end{lem}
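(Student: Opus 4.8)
The plan is to mimic the proof of Lemma \ref{lem:Z.to.Adeles}, where the analogous statement was shown for $G(\bZ)\subset G(\hZ)$ and $G(\bQ)\subset G(\A)$; the only new ingredient is that $k$ plays the role of $\bQ$ and $R$ the role of $\bZ$, and one must verify that the key density/integrality facts still hold in this setting. Concretely, I want to show that for $g\in G(k)$ we have $[G(R):G(R)\cap G(R)^g]=[G(\hR):G(\hR)\cap G(\hR)^g]$, which is exactly the assertion $\chi_{G(R),G(k)}(g)=\chi_{G(\hR),G(\bA)}(g)$.

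First I would record the crucial algebraic fact underlying everything: $\hR\cap k=R$. This holds because $\hR=\prod_{w\ne v}R_w$, so an element of $k$ lying in $\hR$ has $\val_w\ge 0$ for every non-archimedean $w\ne v$, which is precisely the definition of $R$. Next, I would observe that $G(\hR)$ is an open (and compact) subgroup of $G(\bA)$, hence so is its conjugate $G(\hR)^g$ and the intersection $G(\hR)\cap G(\hR)^g$; therefore every coset of $G(\hR)\cap G(\hR)^g$ in $G(\hR)$ is open, and since $G(k)$ is dense in $G(\bA)$ (here I use that $G$ has strong approximation away from $v$, or more simply that $k$ is dense in $\bA$ and $G$ is defined over $k$), each such coset meets $G(k)$. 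But an element of $G(k)$ lying in $G(\hR)$ has all its matrix entries in $\hR\cap k=R$, hence lies in $G(R)$. This produces a representative in $G(R)$ for every coset, giving the inequality $[G(\hR):G(\hR)\cap G(\hR)^g]\le[G(R):G(R)\cap G(R)^g]$ once we also check injectivity.

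For injectivity I would argue exactly as in Lemma \ref{lem:Z.to.Adeles}: if $a,b\in G(R)$ represent distinct cosets of $G(R)\cap G(R)^g$ but the cosets $a(G(\hR)\cap G(\hR)^g)$ and $b(G(\hR)\cap G(\hR)^g)$ coincide, then $b^{-1}a\in G(\hR)\cap G(\hR)^g$; but $b^{-1}a\in G(k)$, so as above $b^{-1}a\in G(R)$, and moreover $b^{-1}a = b^{-1}(h)^{-1}a' h$-type membership in $G(R)^g$ forces $b^{-1}a\in G(R)\cap G(R)^g$ (one checks the conjugate condition survives the same way, using $G(R)^g\cap G(k)=G(R)^g$ since $g\in G(k)$ so $G(R)^g\subset G(k)$ and $G(R)^g\cap G(\hR)\subset G(k)\cap G(\hR)=G(R)$, hence equals $G(R)^g$ intersected appropriately). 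This contradicts $a,b$ representing distinct cosets. Combined with the surjectivity above, the two index sets are in bijection, proving the claimed equality of indices. Finiteness of both indices is automatic since $g\in\Comm$ (or follows because $G(R)$ is a lattice / $G(\hR)$ is compact open, so the analogue of Lemma \ref{lem:chi.product} applies).

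The main obstacle, such as it is, is bookkeeping: making sure the conjugated condition "lies in $G(R)^g$" is preserved under the density argument with the same care as the unconjugated one, and being slightly careful that $G(k)$ really is dense in $G(\bA)$ in the form (affine group, finite adeles away from $v$) needed here rather than invoking strong approximation for $G$ itself, which we have not assumed for the $\PGL_2$ form. I expect this can be handled cleanly by noting that the affine coordinate ring argument only needs density of $k$ in $\bA$ together with the fact that $G(\hR)\cap G(\hR)^g$ is open, which is genuinely elementary; no deep approximation theorem is required for this particular lemma, only for the later passage from $G(R)\subset G(k)$ to an Euler product.
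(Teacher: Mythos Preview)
Your injectivity argument is fine and matches the paper's. The gap is in the surjectivity step, and it is exactly the obstacle you flag but then dismiss: density of $G(k)$ in $G(\bA)$, or equivalently of $G(R)$ in $G(\hR)$, genuinely fails here because $G=\PGL_2$ (or $\PGL_1(D)$) is not simply connected. Density of $k$ in $\bA$ is an affine-space statement; it does \emph{not} imply that $G(k)$ meets every nonempty open subset of $G(\bA)$, since $G(\bA)$ is not open in the ambient affine space. Concretely, the closure of $G(R)$ inside $G(\hR)$ is only the image $G^{sc}(\hR)$ of the simply connected cover (this is what strong approximation for $G^{sc}$ gives), and the quotient $G(\hR)/G^{sc}(\hR)=\prod_w G(R_w)/G^{sc}(R_w)$ is nontrivial. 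So there are open cosets of $G(\hR)\cap G(\hR)^g$ in $G(\hR)$ that a priori contain no element of $G(R)$, and your argument stalls.

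The paper's proof confronts this directly. It sets $H=\bigcup_i g_i\bigl(G(\hR)\cap G(\hR)^g\bigr)$, notes that $H$ is closed and contains $G(R)$, hence contains $G^{sc}(\hR)$ by strong approximation for the simply connected cover, and then shows by hand that $H$ already surjects onto each finite product $\prod_{w\in S}G(R_w)/G^{sc}(R_w)$. This last step uses the specific structure of $\PGL_2$: at a split place $w$ one writes $g$ in Cartan form and observes that all diagonal matrices $\mathrm{diag}(\alpha,1)$ lie in $G(R_w)\cap G(R_w)^g\subset H$, and these generate $G(R_w)/G^{sc}(R_w)$; at a non-split place $G(R_w)=G(k_w)$ so there is nothing to do. The Chinese Remainder Theorem then handles finitely many places at once. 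In short, the paper does \emph{not} reduce to Lemma~\ref{lem:Z.to.Adeles}; it replaces the missing density by an explicit, group-specific computation of the failure of strong approximation.
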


\begin{proof} Suppose that $g_i$ are coset representatives for $G(R)\cap G(R)^g$ in $G(R)$. It is enough to show that they are also coset representatives for $G(\hR)\cap G(\hR)^g$ in $G(\hR)$. To show that they are disjoint, if $g_iG(\hR)\cap G(\hR)^g=g_jG(\hR)\cap G(\hR)^g$, then $g_j^{-1}g_i\in G(\hR)^g$, or $(g_j^{-1}g_i)^{g^{-1}}\in G(\hR)$. Since $(g_j^{-1}g_i)^{g^{-1}}\in G(k)$, we get that $(g_j^{-1}g_i)^{g^{-1}}\in G(R)$, or $g_j^{-1}g_i\in G(R)^g$. Therefore $g_iG(R)\cap G(R)^g=g_jG(R)\cap G(R)^g$, a contradiction.

Finally, let $H=\cup g_i G(\hR)\cap G(\hR)^g$. The set $H$ contains $G(R)$ and is closed, and so it contains the closure of $G(R)$, which is equal to $G^{sc}(\hR)$. Here $G^{sc}$ is the simply connected cover of the algebraic group $G$. Since $G(\hR)/G^{sc}(\hR)=\prod_w G(R_w)/G^{sc}(R_w)$, it is enough to show that $H$ projects onto a dense subset of $\prod_w G(R_w)/G^{sc}(R_w)$. For every $w$, the set $H$ contains $G(R_w)\cap G(R_w)^g$. If $G$ does not split at $w$, then $G(R_w)=G(k_w)$, and so $G(R_w)\cap G(R_w)^g=G(R_w)$. If $G$ splits at $w$, then, by using Cartan decomposition, we can assume that $g=\bmat \pi^n&0\\0&1\emat$, where $\pi$ is a uniformizer. In this case, the matrix $\bmat \al&\\&1\emat$ is contained in $G(R_w)\cap G(R_w)^g$ for every $\al$. Taking $\al\in R_w\setminus R_w^2$, we get generators for $G(R_w)/G^{sc}(R_w)$. Similarly, by using the Chinese Reminder Theorem, one can show that for every finite set of primes $S$, the set $H$ projects onto $\prod_{w\in S}G(R_w)/G^{sc}(R_w)$.
\end{proof}

By the Lemma, the inclusion of $G(k)$ in the set $G(k)G(\hR):=\{gh:\ g\in G(k),\ h\in G(\hR)\}$ induces a map $\Comm_n(G(R),G(k))\to\Comm_n(G(\hR),G(k)G(\hR))$. Since $g\in G(k)$ normalizes $G(\hR)$ if and only if $\chi_{G(R),G(k)}(g)=\chi_{G(\hR),G(k)G(\hR)}(g)=1$, if and only if $g$ normalizes $G(R)$, we get a bijection between $N_{G(k)}G(R)\bs\Comm_n(G(R),G(k))$ and $N_{G(k)G(\hR)}G(\hR)\bs\Comm_n(G(\hR),G(k)G(\hR))$, which shows that
$$\al_{G(\hR),G(k)G(\hR)}=\al_{G(R),G(k)}.$$

\begin{prop}\label{prop:R.k.hR.bA} The abscissae of convergence of $\ze_{G(R),G(k_v)}(s)$ and of $\ze_{G(\hR),G(\bA)}(s)$ are equal. Moreover, there is a constant $D$ such that for all $n$,
\begin{equation}\label{eq:c.leq.ineq}
c_{\leq n}(G(R),G(k_v))\leq c_{\leq n}(G(\hR),G(\bA))\leq Dc_{\leq Dn}(G(R),G(k_v))
\end{equation}
\end{prop}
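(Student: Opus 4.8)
The plan is to bootstrap from the local statement we already have, namely the bijection $N_{G(k)}G(R)\bs\Comm_n(G(R),G(k)) \leftrightarrow N_{G(k)G(\hR)}G(\hR)\bs\Comm_n(G(\hR),G(k)G(\hR))$ together with the index equality $\chi_{G(R),G(k)}(g)=\chi_{G(\hR),G(k)G(\hR)}(g)$ from Lemma \ref{lem:R.k.hR.bA}, and compare each of the two intermediate objects $G(k)G(\hR)$ and $G(\bA)$ to the ``true'' ambient groups $G(k_v)$ and $G(\bA)$ respectively. The first inequality in \eqref{eq:c.leq.ineq} is the easy direction: since $G(k)$ embeds in $G(k_v)$ and in $G(\bA)$ (diagonally, using strong approximation so that $G(k)$ is a lattice in both), and since commensuration of $G(R)$ inside $G(k_v)$ is witnessed already by elements of $G(k)$ (this is the content of the commensurizer lemma, $\Comm(G(R),G(k_v))=G(k)$), restricting the counting set to a subset can only decrease $c_{\leq n}$; more precisely I would note that the map $G(k)\hookrightarrow G(\bA)$ realizes $N_{G(k)}G(R)\bs\Comm_n(G(R),G(k))$ as a subset of $N_{G(\bA)}G(\hR)\bs\Comm_n(G(\hR),G(\bA))$, using that $G(k)\cap G(\hR)=G(R)$ so that no collapsing of cosets occurs.

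For the reverse inequality the idea is that $G(\bA)$ is not much bigger than $G(k)G(\hR)$, in the sense controlled by the finite set of ``bad'' places: write $G(\bA)=G(k)\cdot G(\hR)\cdot(\text{bounded error})$. Concretely, by strong approximation $G^{sc}(k)$ is dense in $G^{sc}(\bA)$, and $G(\bA)/G^{sc}(\bA)$ is compact (in fact $\prod_w G(R_w)/G^{sc}(R_w)$ after intersecting with $G(\hR)$), so there is a compact set $C\subset G(\bA)$, independent of $n$, with $G(\bA)=G(k)G(\hR)C$; one may even take $C$ to be a finite union of $G(\hR)$-cosets. Then for $g\in\Comm_n(G(\hR),G(\bA))$ write $g=\gamma h c$ with $\gamma\in G(k)$, $h\in G(\hR)$, $c\in C$; conjugation by $h$ fixes $\chi$-value $1$, and conjugation by $c$ changes $\chi$ by a bounded multiplicative factor $D_0$ depending only on $C$ (by Lemma \ref{lem:chi.product}, since $\chi_{G(\hR),G(\bA)}(c)$ is bounded uniformly over the compact set $C$ — here one uses that $C$ meets only finitely many nontrivial $G(\hR)$-double cosets, so $\sup_{c\in C}\chi(c)<\infty$). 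Hence each $g\in\Comm_{\leq n}(G(\hR),G(\bA))$ is, modulo $N_{G(\bA)}G(\hR)$ on the left, represented by an element $\gamma\in G(k)G(\hR)$ with $\chi(\gamma)\leq D_0 n$, and two such $g$'s in distinct $N_{G(\bA)}G(\hR)$-orbits with the same $\gamma$ can differ only by the bounded data $c$, giving at most $|C/G(\hR)|=:D_1$ possibilities. Combining with the local bijection above (identifying $\Comm$ in $G(k)G(\hR)$ with $\Comm$ in $G(k)$), we get $c_{\leq n}(G(\hR),G(\bA))\leq D_1\, c_{\leq D_0 n}(G(R),G(k_v))$, which is \eqref{eq:c.leq.ineq} with $D=\max(D_0,D_1)$. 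The equality of abscissae of convergence then follows formally: \eqref{eq:c.leq.ineq} forces the partial-sum sequences $c_{\leq n}$ on the two sides to have the same polynomial/exponential type, and in particular the Dirichlet series $\ze_{G(R),G(k_v)}(s)$ and $\ze_{G(\hR),G(\bA)}(s)$ converge for exactly the same real $s$, hence have the same abscissa.

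The main obstacle I expect is making the ``bounded error'' step fully precise: one must choose the compact set $C$ so that it is simultaneously (a) independent of $n$, (b) transversal to $G(k)G(\hR)$ in a way that lets the decomposition $g=\gamma h c$ be carried out with control on $\gamma$, and (c) small enough that $\sup_{c\in C}\chi_{G(\hR),G(\bA)}(c)$ is finite and the number of $G(\hR)$-cosets it meets is finite. This is where the structure theory is really used: $G(\bA)/G^{sc}(\bA)$ is a compact abelian-by-finite group and, at almost every place $w$, $G(R_w)=G(k_w)$ (no splitting) so there is no contribution at all, leaving only the finitely many split places and the archimedean place $v$ itself — it is at those finitely many places that $C$ lives, and there $\chi$ is bounded because the Cartan decomposition parameter is bounded on a compact set. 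Once $C$ is pinned down this way the rest is the bookkeeping sketched above; I would also take care that the left-action by $N_{G(\bA)}G(\hR)$ (rather than the smaller $N_{G(k)G(\hR)}G(\hR)$) does not cause over- or under-counting, which is handled by the same open-coset/rationality argument as in Lemma \ref{lem:Z.to.Adeles} and Lemma \ref{lem:R.k.hR.bA}.
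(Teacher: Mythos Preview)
Your overall strategy coincides with the paper's: decompose $G(\bA)$ into finitely many $G(\hR)$--$G(k)$ pieces and control each piece via the submultiplicativity of $\chi$ from Lemma~\ref{lem:chi.product}. The paper phrases this by choosing finitely many representatives $\ga_i$ for the double cosets $G(\hR)\bs G(\bA)/G(k)$ and bounding $\ze_{G(\hR),G(\hR)\ga_i G(k)}(s)$ in terms of $\ze_{G(R),G(k)}(s)$ via $\chi(\ga_i g)\leq\chi(\ga_i)\chi(g)$; your set $C$ is exactly $\{\ga_i\}$, and your constants $D_0,D_1$ are $\max_i\chi(\ga_i)$ and the number of $\ga_i$'s.

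The gap is in your justification that $C$ can be taken finite. You assert that ``at almost every place $w$, $G(R_w)=G(k_w)$ (no splitting), leaving only the finitely many split places.'' This is backwards: a quaternion algebra over a global field splits at \emph{all but finitely many} places, so for almost every $w$ one has $G(k_w)\cong\PGL_2(k_w)$, which is noncompact, and $G(R_w)\subsetneq G(k_w)$. The related claim that $G(\bA)/G^{sc}(\bA)$ is compact and identifiable with $\prod_w G(R_w)/G^{sc}(R_w)$ is also not what you want here. What you actually need is the finiteness of the class number $|G(\hR)\bs G(\bA)/G(k)|$; this is a standard but nontrivial fact for reductive groups over global fields, and the paper simply cites it from \cite{P} rather than attempting to derive it from strong approximation for $G^{sc}$. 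Once that finiteness is taken as input, the remainder of your bookkeeping (decompose $g=\ga_i\cdot(\text{element of }G(k))\cdot(\text{element of }G(\hR))$, apply Lemma~\ref{lem:chi.product}, invoke the bijection established just before the proposition) is correct and matches the paper's argument essentially line for line.
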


\begin{proof} The set $G(\hR)\bs G(\bA)/G(k)$ is finite by \cite{P}; let $G(\hR)\ga_i G(k)$ be coset representatives. It follows that
\[
\ze_{G(\hR),G(\bA)}(s)=\sum \ze_{G(\hR),G(\hR)\ga_i G(k)}(s),
\]
and, hence, that $\al_{G(\hR),G(\bA)}=\max \al_{G(\hR),G(\hR)\ga_i G(k)}$. In particular, taking the trivial coset ($\ga_i=1$), we get that $\al_{G(\hR),G(\bA)}\geq\al_{G(\hR),G(\hR)G(k)}=\al_{G(R),G(k)}$. The left inequality of (\ref{eq:c.leq.ineq}) also follows.

For every $i$, the map $G(k)\to G(\hR)\ga_i G(k)$ given by $g\mapsto G(\hR)\ga_i g$ is a surjection with fibers $G(k)\cap G(\hR)^{\ga_i^{-1}}$. Let $\De=G(R)\cap G(\hR)^{\ga_i^{-1}}$, and denote the index of $\De$ in $G(R)$ by $N$. Then
\[
\ze_{G(\hR),G(k)\ga_i G(\hR)}(s)=\sum_{g\in G(\hR)\bs G(\hR)\ga_i G(k)}\chi(g)^{-s}\leq\sum_{g\in \De\bs G(k)}\chi(g\ga_i)^{-s}\leq
\]
by Lemma \ref{lem:chi.product},
\[
\leq \sum_{g\in \De\bs G(k)}\chi(g)^{-s}\chi(\ga_i)^s\leq N\chi(\ga_i)^s\sum_{g\in G(R)\bs G(k)}\chi(g)^{-s}=N\chi(\ga_i)^s\ze_{G(R),G(k)}.
\]
Therefore, the second inequality of (\ref{eq:c.leq.ineq}) holds, and $\al_{G(R),G(k)}=\al_{G(\hR),G(\bA)}$.
\end{proof}

By Lemma \ref{lem:Comm.prod} we have $\ze_{G(\hR),G(\bA)}(s)=\prod_w\ze_{G(R_w),G(k_w)}(s)$, the product being taken over the set of non-archimedian valuations of $k$ that are different from $v$. We turn to study the local zeta functions. If $G$ does not split over $w$ then $\ze_{G(R_w),G(k_w)}(s)=1$. Otherwise, it is $\ze_{\PGL_2(R_w),\PGL_2(k_w)}(s)$.

Let $K=\PGL_2(R_w)$. Choose a uniformizer, $\pi$, for $k_w$, and let $t=\bmat \pi&0\\0&1\emat$. It is known (Cartan decomposition) that every element of $\PGL_2(k_w)$ can be written as $g=k_1t^nk_2$ for $k_1,k_2\in K$ and $n\geq0$. For such an element, $g$, we have
\[
K^g\cap K=K^{k_1t^nk_2}\cap K=K^{t^nk_2}\cap K=\left(K^{t^n}\cap K\right)^{k_2},
\]
and in particular
\[
[K:K^g\cap K]=[K:K^{t^n}\cap K].
\]
Computing, we find that
\[
K^{t^n}\cap K=\left\{\bmat a&b\\c&d\emat : w(c)\geq n\right\}.
\]

We denote the size of the residue field $R/\pi$ by $|w|$.

\begin{lem} If $n>0$, then the index of $K^{t^n}\cap K$ in $K$ is $(|w|+1)|w|^{n-1}$.
\end{lem}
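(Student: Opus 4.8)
The plan is to compute the index using the explicit description
$K^{t^n}\cap K=\{\bmat a&b\\c&d\emat\in K:\ w(c)\ge n\}$ already obtained, by reducing the whole question modulo $\pi^n$. Write $q=|w|$ and $\bar R=R_w/\pi^n$, so $|\bar R|=q^n$ and $|\bar R^\times|=q^{n-1}(q-1)$. First I would lift the problem from $\PGL_2$ to $\GL_2$: let $\widetilde K=\GL_2(R_w)$ and $\widetilde K_n=\{g\in\widetilde K:\ w(c)\ge n\}$, which is a subgroup since the lower-left entry of a product $gg'$ is $ca'+dc'$. Because every scalar matrix lies in $\widetilde K_n$, the quotient map $\widetilde K\to K$ carries $\widetilde K_n$ onto $K^{t^n}\cap K$ and induces a bijection $\widetilde K_n\backslash\widetilde K\to(K^{t^n}\cap K)\backslash K$; hence $[K:K^{t^n}\cap K]=[\GL_2(R_w):\widetilde K_n]$.

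Next I would pass to the finite quotient. Reduction modulo $\pi^n$ gives a surjection $\rho\colon\GL_2(R_w)\to\GL_2(\bar R)$, and $\widetilde K_n$ is exactly $\rho^{-1}(B)$, where $B\subset\GL_2(\bar R)$ is the Borel subgroup of upper-triangular matrices (a matrix satisfies $w(c)\ge n$ precisely when its reduction has zero lower-left entry). Therefore $[\GL_2(R_w):\widetilde K_n]=[\GL_2(\bar R):B]$. Finally, $B$ is the stabilizer of the standard line $[e_1]$ for the action of $\GL_2(\bar R)$ on the set $\mathbb{P}^1(\bar R)$ of unimodular column vectors up to scaling by $\bar R^\times$, and $\GL_2(\bar R)$ acts transitively on this set (a unimodular vector extends to an invertible matrix). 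Hence $[\GL_2(\bar R):B]=|\mathbb{P}^1(\bar R)|$.

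It then remains to count $|\mathbb{P}^1(\bar R)|$. The number of unimodular vectors in $\bar R^2$ is $|\bar R|^2$ minus the number of vectors with both coordinates in $\pi\bar R$, namely $q^{2n}-q^{2(n-1)}$; and since a unimodular vector has a unit coordinate, its $\bar R^\times$-orbit has exactly $|\bar R^\times|=q^{n-1}(q-1)$ elements. Thus $|\mathbb{P}^1(\bar R)|=(q^{2n}-q^{2n-2})/(q^{n-1}(q-1))=(q+1)q^{n-1}$, which is the claimed value.

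None of these steps is genuinely difficult; the points needing a little care—rather than being obstacles—are that passing from $\PGL_2$ to $\GL_2$ leaves the index unchanged (which works precisely because the center lies in $\widetilde K_n$), and that $\rho$ is surjective with $\widetilde K_n=\rho^{-1}(B)$. As a consistency check one could instead argue geometrically: $\PGL_2(k_w)$ acts on the $(q+1)$-regular Bruhat--Tits tree with $K$ the stabilizer of a vertex $v_0$ and $K^{t^n}\cap K$ the pointwise stabilizer of the length-$n$ geodesic issuing from $v_0$, so the index equals the size of the sphere of radius $n$, on which $K$ acts transitively---again $(q+1)q^{n-1}$.
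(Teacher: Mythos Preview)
Your argument is correct and follows essentially the same route as the paper: both reduce modulo $\pi^n$ (the paper via the congruence subgroup $K_n$, you via the surjection $\rho$) and then compute the index of the upper-triangular subgroup in the resulting finite group. The only cosmetic difference is that the paper stays in $\PGL_2(R_w/\pi^n)$ and divides explicit group orders, whereas you lift to $\GL_2$ and read the index off as $|\mathbb{P}^1(R_w/\pi^n)|$ via orbit--stabilizer; your Bruhat--Tits tree remark is a pleasant independent consistency check not present in the paper.
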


\begin{proof} Let $K_n$ be the $n$'th congruence subgroup. Then $K_n\subset K^{t^n}\cap K$, so it is enough to compute the index of the projection to $K/K_n=\PGL_2(R/\pi^n)$. The size of $\GL_2(R/\pi)$ is $(|w|^2-1)(|w|^2-|w|)$. Therefore the size of $\PGL_2(R/p)$ is $(|w|^2-1)|w|$. Therefore the size of $\PGL_2(R/\pi^n)$ is $(|w|^2-1)|w|^{1+3(n-1)}=(|w|^2-1)|w|^{3n-2}$. The projection of $K^{t^n}\cap K$ is the quotient of the group of upper triangular invertible matrices (there are $|(R/\pi^n)^\times|^2|R/\pi^n|$ of those) by the group of scalar matrices (there are $|(R/\pi^n)^\times|$ of those). Hence the projection of $K^{t^n}\cap K$ has size $(|w|-1)|w|^{2n-1}$.
\end{proof}

\begin{lem} If $n>0$, then the number of cosets of $K$ inside $Kt^nK$ is $(|w|+1)|w|^{n-1}$.
\end{lem}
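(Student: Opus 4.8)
The plan is to deduce the count from the index computed in the previous lemma, via the standard bijection between left cosets inside a double coset $KgK$ and cosets of a conjugate subgroup. First I would record the general fact: for a subgroup $K$ of a group $G$ and $g\in G$, the double coset $KgK$ is a disjoint union of exactly $[K:K\cap gKg^{-1}]$ left cosets of $K$. This is immediate --- the surjection $k\mapsto kgK$ from $K$ onto the set of left cosets of $K$ contained in $KgK$ has fibers $k(K\cap gKg^{-1})$, since $k_1gK=k_2gK$ iff $k_2^{-1}k_1\in gKg^{-1}$, and $k_2^{-1}k_1$ lies in $K$ automatically.

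Next I would apply this with $G=\PGL_2(k_w)$ and $g=t^n$, so that the number of cosets of $K$ in $Kt^nK$ equals $[K:K\cap t^nKt^{-n}]$. An element $x\in K$ lies in $t^nKt^{-n}$ iff $t^{-n}xt^n\in K$; computing $t^{-n}xt^n$ for $x=\bmat a&b\\c&d\emat$ and demanding its entries be integral (the determinant is unchanged under conjugation, hence still a unit) shows $K\cap t^nKt^{-n}=\{\bmat a&b\\c&d\emat\in K:\ w(b)\geq n\}$. This is exactly the image of $K^{t^n}\cap K=\{\bmat a&b\\c&d\emat\in K:\ w(c)\geq n\}$ (computed just before the previous lemma) under conjugation by the Weyl element $w_0=\bmat 0&1\\1&0\emat$. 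Since $\det w_0$ is a unit, $w_0$ represents an element of $K$, so conjugation by it is an index-preserving automorphism of $K$; hence $[K:K\cap t^nKt^{-n}]=[K:K^{t^n}\cap K]$. (Equivalently, one may note directly that $Kt^nK=Kt^{-n}K$, because $t^{-n}=w_0t^nw_0$ in $\PGL_2$, and apply the counting fact with $g=t^{-n}$ to land on $[K:K\cap t^{-n}Kt^n]=[K:K^{t^n}\cap K]$.)

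Finally, the previous lemma evaluates $[K:K^{t^n}\cap K]$ to $(|w|+1)|w|^{n-1}$ for $n>0$, which is the desired count. I do not expect a real obstacle here: the only points needing care are the left/right bookkeeping in the double-coset count and remembering that all matrix identities are read in $\PGL_2$, i.e. projectively, so that $t^{-n}$ is the class of $\bmat 1&0\\0&\pi^n\emat$ and $w_0$ genuinely represents an element of $K$.
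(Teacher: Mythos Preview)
Your proof is correct and is essentially the paper's own argument: the paper phrases the same surjection $k\mapsto kt^nK$ as the transitive left action of $K$ on the cosets in $Kt^nK$, identifies the stabilizer of $t^nK$ as $K\cap K^{t^{-n}}$, and then uses the Weyl element $\bmat 0&1\\1&0\emat\in K$ to conjugate $t^{-n}$ to $t^n$ and invoke the previous lemma. Your explicit check that $K\cap t^nKt^{-n}=\{w(b)\ge n\}$ and the parenthetical $Kt^nK=Kt^{-n}K$ are just small elaborations of exactly that step.
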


\begin{proof} $K$ acts transitively on this set of cosets. The stabilizer of the coset $t^nK$ is exactly $K^{t^{-n}}\cap K$. But $t^{-n}$ is conjugate to $t^n$ by an element of $K$ (namely $\bmat0&1\\1&0\emat$). So the size of the orbit is the index of $K^{t^n}\cap K$ in $K$.
\end{proof}


Therefore
\begin{eqnarray}\label{eq:loc.zeta.func.PGL_2}
Z_{\PGL_2(R_w),\PGL_2(k_w)}(s)=1+\sum_{n=1}^\infty\left[(|w|+1)|w|^{n-1}\right]^{1-s}=
\end{eqnarray}
\begin{eqnarray*}
1+(|w|+1)^{1-s}\sum_{n=0}^\infty |w|^{(1-s)n}=1+(|w|+1)^{1-s}\frac{1}{1-|w|^{1-s}}.
\end{eqnarray*}

\begin{thm} Let $F$ be a local field, and let $\Ga$ be a lattice in $\PGL_2(F)$.
\begin{enumerate}
\item If $\Ga$ is arithmetic, then the sequence $c_{\leq n}(\Ga,\PGL_2(F))/n^2$ is bounded away from 0 and infinity, as $n$ tends to infinity.
\item If $\Ga$ is not arithmetic, then the sequence $c_{\leq n}(\Ga,\PGL_2(F))$ is bounded.
\end{enumerate}
\end{thm}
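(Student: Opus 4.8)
The plan is to handle (1) by reducing, through the results of Section \ref{sec:pre}, to a single Euler product over the finite places of $k$, and then to run a Tauberian argument exactly parallel to the Heisenberg computation; statement (2) will follow quickly from Margulis's commensurator characterization of arithmeticity.

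For (1), I would first reduce to $\Ga=G(R)$: by the footnote to the classification above, i.e.\ by Proposition \ref{prop:IndexGapForCommensurales}, passing to a commensurable lattice changes $\chi_{\Ga,\PGL_2(F)}$ only by bounded multiplicative factors, hence does not affect whether $c_{\leq n}/n^2$ is bounded away from $0$ and $\infty$. Next, by Proposition \ref{prop:R.k.hR.bA} together with the inequalities (\ref{eq:c.leq.ineq}), it is enough to prove that $c_{\leq n}(G(\hR),G(\bA))$ lies between two positive multiples of $n^2$; the constants $D$ appearing in (\ref{eq:c.leq.ineq}) are exactly what obstructs an exact limit, which is why the statement is only ``bounded away from $0$ and $\infty$''. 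Finally, by Lemma \ref{lem:Comm.prod}, $\ze_{G(\hR),G(\bA)}(s)=\prod_w \ze_{G(R_w),G(k_w)}(s)$ over the finite places $w\neq v$, and this factor equals $1$ at the finitely many $w$ where $G$ does not split and equals the expression (\ref{eq:loc.zeta.func.PGL_2}) at every other $w$.

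The hard part will be the analysis of this Euler product. Writing $q_w=|w|$ and $b_w(s)=(q_w+1)^{1-s}-q_w^{1-s}$, I would rewrite the split local factor as $\frac{1+b_w(s)}{1-q_w^{1-s}}$, so that, up to a finite product of factors that are entire and do not vanish near $s=2$ (coming from the non-split places and the missing place $v$), one has $\ze_{G(\hR),G(\bA)}(s)=\zeta_k(s-1)\,E(s)$, where $\zeta_k$ is the Dedekind zeta function of $k$ and $E(s)=\prod_w (1+b_w(s))$. The mean value theorem gives $|b_w(s)|\le|1-s|\,q_w^{-\Re(s)}$, and since $\sum_w q_w^{-\sigma}$ converges for $\sigma>1$ (the abscissa of convergence of $\zeta_k$ being $1$), $E(s)$ converges absolutely, hence is holomorphic, on $\Re(s)>1$; moreover $1+b_w(2)=1-\frac1{q_w(q_w+1)}>0$ together with $\sum_w \frac1{q_w(q_w+1)}<\infty$ give $E(2)>0$. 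Since $\zeta_k(s-1)$ has a simple pole at $s=2$, continues meromorphically to all of $\bC$, and does not vanish on the line $\Re(s)=2$, it follows that $\ze_{G(\hR),G(\bA)}(s)$ has abscissa of convergence $2$, a simple pole at $s=2$, and a holomorphic extension to a neighbourhood of the closed half-plane $\Re(s)\ge 2$ minus the point $s=2$. Then the Tauberian theorem \cite[Theorem 4.20]{dSG}, applied just as in the proof of Theorem \ref{thm:HeisenbergGrowth}, yields $c_{\leq n}(G(\hR),G(\bA))\sim C n^2$ for some $C>0$, and unwinding the two reductions gives (1).

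For (2), if $\Ga$ is not arithmetic then Margulis's commensurator criterion for arithmeticity (\cite{M}, and \cite{LMZ} in the tree case) shows that $\Comm(\Ga,\PGL_2(F))$ is not dense in $\PGL_2(F)$, and in fact is discrete. A discrete subgroup of $\PGL_2(F)$ containing the lattice $\Ga$ is itself a lattice, so $M:=[\Comm(\Ga,\PGL_2(F)):\Ga]<\infty$; since $N_{\PGL_2(F)}(\Ga)\supseteq\Ga$, we obtain $c_{\leq n}(\Ga,\PGL_2(F))\le|N_{\PGL_2(F)}(\Ga)\bs\Comm(\Ga,\PGL_2(F))|\le M$ for every $n$, so the sequence is bounded. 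The one point requiring care is invoking the commensurator criterion in the correct generality over every local field $F$, which for non-archimedean $F$ is the assertion that a non-arithmetic lattice in $\PGL_2(F)$ has discrete commensurizer.
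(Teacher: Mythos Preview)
Your proposal is correct and follows essentially the same route as the paper: factor $\ze_{G(\hR),G(\bA)}(s)$ as $\zeta_k(s-1)$ times an absolutely convergent Euler product $\prod_w(1+b_w(s))$, locate the simple pole at $s=2$, apply the Tauberian theorem, and handle (2) via Margulis's commensurator criterion. The only cosmetic differences are that the paper treats the form $G=\PGL_1(D)$ in a separate paragraph (deducing it from the $\PGL_2$ computation by observing that the quotient of the two adelic zeta functions is a finite product of local factors, rather than absorbing the non-split and missing places directly as you do), and for (2) the paper cites \cite[Theorem IX.1.9(B)]{Mbook} for the finite-index conclusion outright, without the intermediate step through discreteness.
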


\begin{proof} By a theorem of Margulis \cite[Theorem IX.1.9(B)]{Mbook}, if $\Ga$ is not arithmetic then it has finite index in its commensurizer. Hence its commensurizer growth is bounded. As for the first claim, we first show it for the form $G=\PGL_2$. By Equation (\ref{eq:loc.zeta.func.PGL_2}),
\[
Z_{\PGL_2(\hR),\PGL_2(\bA)}(s)=\prod_w \left(1+\frac{(|w|+1)^{1-s}}{1-|w|^{1-s}}\right)=\zeta_k(s-1)\prod_w\left(1+(|w|+1)^{1-s}-|w|^{1-s}\right),
\]
where $\zeta_k(s)$ is the Dedekind zeta function of $k$. Assume that $s>1$. Since for all but finitely many $w$'s
\[
\frac{1}{2}(s-1)|w|^{-s}<(|w|+1)^{1-s}-|w|^{1-s}<2(s-1)|w|^{-s},
\]
and since
\[
\prod_w\left(1+\frac{1}{2}(s-1)|w|^{-s}\right)\textrm{\quad and \quad}\prod_w\left(1+2(s-1)|w|^{-s}\right)
\]
converge absolutely and are different from 0, we get that
\[
\prod_w\left(1+(|w|+1)^{1-s}-|w|^{1-s}\right)
\]
converges absolutely for $s>1$, and is non-zero. Therefore $Z_{\PGL_2(\hR),\PGL_2(\bA)}(s)$ is meromorphic in the half plane $\Re(s)>1$ and has a simple pole at $s=2$. By a Tauberian theorem, there is a constant $C>0$ such that
\[
\lim_{n\to\infty}\frac{c_{\leq n}(\PGL_2(\hR),\PGL_2(\bA))}{n^2}=C.
\]
The claim now follows by Proposition \ref{prop:R.k.hR.bA}.

To show the claim for the form $G=\PGL_1(D)$, first we note that
$$Z_{\PGL_2(\hR),\PGL_2(\bA)}/Z_{\PGL_1(D)(\hR),\PGL_1(D)(\bA)}$$
 is a finite product of functions of the form  (\ref{eq:loc.zeta.func.PGL_2}).
Indeed, as any quaternion algebra splits over all but finitely many primes only finitely many factors survive in the enumerator and denominator of the above quotient. If
$\PGL_1(D)$ does not split at $w$, then $\PGL_1(D)(R)=\PGL_1(D)(k_w)$ hence the corresponding factor in the denominator equals $1$.

We conclude that the biggest pole of $Z_{\PGL_2(\hR),\PGL_2(\bA)}/Z_{\PGL_1(D)(\hR),\PGL_1(D)(\bA)}$ is at $s=1$. Hence the biggest pole of $Z_{\PGL_1(D)(\hR),\PGL_1(D)(\bA)}$ is at $s=2$, it has meromorphic continuation to $\Re(s)>1$, and the pole at $s=2$ is simple. The rest of the argument is the same.
\end{proof}




\section{Uniform Lattices in the Automorphism Group of Trees}\label{sec:countRecolorings}

As mentioned in the introduction, lattices in some Lie groups over a non-archimedean local field can be considered as lattices in the automorphism group of a tree. In this section we show that the commensurizer growth of tree lattices are much bigger than that of the corresponding lattices in Lie groups. We will consider only uniform tree lattices. For those lattices, it is well-known that the lattice is of finite index in its normalizer \cite{BL}, thus we will count the commensurizer growth up to the lattice instead of its normalizer.

In order to compute the commensurizer growth we will give a generalization and a refinement of the correspondence in \cite{LMZ} between the commensurizer and the set of recolorings of pointed graphs, looked at from a slightly different point of view.

Let $T$ be a uniform tree and let $G=\Aut(T)$. Suppose that $\Ga\subset G$ is a torsion-free lattice. Let $Y=\Ga\bs T$ be the quotient graph and let $\pi:T\to Y$ be the canonical projection. Fix a vertex $t_0\in T$ and let $\pi(t_0)=y_0\in Y$.

\begin{lem}\label{lem:phi}
The map $\phi: \Ga g\mapsto \pi\circ g$ is a bijection between $\Ga\bs G$ and the collection of covering maps $T\to Y$.
\end{lem}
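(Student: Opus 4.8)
The plan is to exhibit an explicit inverse to $\phi$ and check both compositions are the identity. First I would verify that $\phi$ is well-defined: if $\Ga g = \Ga g'$, then $g' = \ga g$ for some $\ga \in \Ga$, so $\pi \circ g' = \pi \circ \ga \circ g = \pi \circ g$ since $\pi$ is the quotient by the $\Ga$-action. Also $\pi \circ g$ is a covering map because $g$ is an automorphism of $T$ and $\pi$ is a covering map (here $\Ga$ is torsion-free, so $\Ga$ acts freely on $T$ and the quotient map is genuinely a covering, not just a quotient of graphs with possible fixed points); the composition of a covering with an automorphism is a covering. So $\phi$ indeed lands in the set of covering maps $T \to Y$.

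Next I would construct the inverse. Given a covering map $f: T \to Y$, I want to produce a coset $\Ga g$. Since $T$ is a tree, it is the universal cover of $Y$, so any covering map $f: T \to Y$ is, up to deck transformation on the source, ``the'' universal covering $\pi$. Concretely, fix the basepoint data: $\pi(t_0) = y_0$. Given $f$, pick any vertex $t_1 \in T$ with $f(t_1) = y_0$. By the lifting criterion for covering spaces (trees are simply connected, being contractible), there is a unique covering isomorphism $g: T \to T$ with $g(t_0) = t_1$ and $f \circ g = \pi$; equivalently $\pi \circ g^{-1} = f$, i.e. $f = \pi \circ h$ with $h = g^{-1} \in G = \Aut(T)$. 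The only ambiguity is the choice of $t_1 \in f^{-1}(y_0)$; two such choices differ by a deck transformation of $\pi$, and the group of deck transformations of the universal cover $\pi: T \to Y = \Ga \bs T$ is exactly $\Ga$. Hence the coset $\Ga h$ is well-defined independently of the choice of $t_1$, and $f \mapsto \Ga h$ is the desired map.

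Then I would check the two compositions. Starting from $\Ga g$, applying $\phi$ gives $f = \pi \circ g$; running the inverse construction, any $h$ with $\pi \circ h = \pi \circ g$ satisfies $\pi \circ (h g^{-1}) \cdot g= \pi \circ g$... more cleanly, $\pi \circ h = \pi \circ g$ means $g h^{-1}$ (or $h g^{-1}$, depending on conventions) is a deck transformation of $\pi$, hence lies in $\Ga$, so $\Ga h = \Ga g$. Conversely, starting from a covering $f$, the construction yields $h$ with $\pi \circ h = f$, so $\phi(\Ga h) = \pi \circ h = f$. This gives a two-sided inverse.

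I expect the main obstacle to be purely expository rather than mathematical: making precise, in the combinatorial/graph-theoretic setting, the statement that $\pi: T \to \Ga \bs T$ is the universal covering with deck group $\Ga$, and that the covering-space lifting criterion applies to graphs. The substantive inputs are (i) $T$ is a tree, hence simply connected, so it is the universal cover of any graph it covers, giving existence and uniqueness of lifts once a basepoint is chosen; and (ii) $\Ga$ torsion-free acting on a tree acts freely, so $\Ga \bs T$ is a well-behaved quotient and $\Ga$ is precisely $\mathrm{Deck}(T/Y)$. Once these are cited or recalled, the bijection is essentially the standard classification of coverings, and the only care needed is bookkeeping of left versus right actions (we are quotienting $G$ on the left by $\Ga$, and $\Ga$ acts on $T$ on the left), which determines whether one writes $\pi \circ g$ or $\pi \circ g^{-1}$ — I would fix the convention to match the displayed formula $\phi(\Ga g) = \pi \circ g$ and carry it consistently.
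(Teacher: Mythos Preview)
Your proposal is correct and follows essentially the same approach as the paper: construct the inverse by lifting a given covering $f:T\to Y$ through the universal cover using the simple connectivity of $T$, and observe that the lift is an automorphism. The paper's proof is much terser---it only sketches the inverse map and omits the well-definedness and deck-transformation bookkeeping that you spell out---but the underlying idea is identical.
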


\begin{proof}
We describe the inverse map. Suppose that $f:T\to Y$ is a covering. Since $T$ is simply connected, $f$ lifts to a map $g: T\to T$ such that $f=\pi\circ g$. Since $f=\pi\circ g$, $g$ is a covering map. Since $T$ is simply connected, $g$ is an automorphism.
\end{proof}

This map clearly intertwines the right multiplication by $g\in G$ on $\Ga \bs G$ and pre-composition by $g\in G$ on the set of covering maps $T\to Y$.

\begin{lem}\label{lem:phi*}
The map $\phi$ from Lemma \ref{lem:phi} induces a bijection between $\Ga\bs\Comm_n(\Ga,G)$ and the collection of covering maps $T\to Y$ whose $\Ga$-orbits have size $n$.
\end{lem}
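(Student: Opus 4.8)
The argument is essentially bookkeeping built on Lemma~\ref{lem:phi}; the only substantive point is that $[\Ga:\Ga\cap\Ga^{g}]=n<\infty$ already forces $g$ into the commensurizer, and that is the step I expect to need the most care.

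The plan is to use the equivariance recorded immediately after Lemma~\ref{lem:phi}: $\phi$ intertwines right multiplication by $G$ on $\Ga\bs G$ with precomposition by $G$ on the set of covering maps $T\to Y$. Restricting the group to $\Ga$, it is equivariant for the right $\Ga$-actions on the two sides, so, being a bijection, it carries each $\Ga$-orbit bijectively onto a $\Ga$-orbit; in particular orbit sizes are preserved. Hence it suffices to establish two facts: (i) $\Comm_{n}(\Ga,G)$ is a union of left $\Ga$-cosets, so that $\Ga\bs\Comm_{n}(\Ga,G)$ is a genuine subset of $\Ga\bs G$; and (ii) a coset $\Ga g$ lies in this subset if and only if the $\Ga$-orbit of the covering $\phi(\Ga g)=\pi\circ g$ has size exactly $n$.

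For (i), if $\gamma\in\Ga$ then $\Ga^{\gamma g}=g^{-1}\gamma^{-1}\Ga\gamma g=g^{-1}\Ga g=\Ga^{g}$, so $\chi_{\Ga,G}(\gamma g)=\chi_{\Ga,G}(g)$; thus every level set $\{g:\chi_{\Ga,G}(g)=n\}$, and in particular $\Comm_{n}(\Ga,G)$, is left $\Ga$-invariant. For the orbit computation underlying (ii), the $\Ga$-orbit of $\pi\circ g$ under precomposition is $\{\pi\circ g\circ\gamma:\gamma\in\Ga\}$, which is the $\phi$-image of $\{\Ga g\gamma:\gamma\in\Ga\}$. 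Here $\Ga g\gamma_{1}=\Ga g\gamma_{2}$ precisely when $\gamma_{1}\gamma_{2}^{-1}\in g^{-1}\Ga g=\Ga^{g}$, equivalently $\gamma_{1}\gamma_{2}^{-1}\in\Ga\cap\Ga^{g}$ (since $\gamma_{1}\gamma_{2}^{-1}\in\Ga$ automatically), so $\gamma\mapsto\Ga g\gamma$ descends to a bijection $(\Ga\cap\Ga^{g})\bs\Ga\to\{\Ga g\gamma:\gamma\in\Ga\}$. Therefore the $\Ga$-orbit of $\pi\circ g$ has size $[\Ga:\Ga\cap\Ga^{g}]=\chi_{\Ga,G}(g)$, be it finite or infinite, and so the coverings $T\to Y$ with $\Ga$-orbit of size $n$ are exactly the $\phi$-images of the cosets $\Ga g$ with $\chi_{\Ga,G}(g)=n$.

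It remains to check that $\{g:\chi_{\Ga,G}(g)=n\}=\Comm_{n}(\Ga,G)$, i.e.\ that $[\Ga:\Ga\cap\Ga^{g}]=n<\infty$ implies $[\Ga^{g}:\Ga\cap\Ga^{g}]<\infty$; this is where the lattice hypothesis enters and is the delicate step. If $[\Ga:\Ga\cap\Ga^{g}]$ is finite then $\Ga\cap\Ga^{g}$ has finite index in $\Ga$, hence is itself a lattice in $G$, so $\vol(G/(\Ga\cap\Ga^{g}))=n\,\vol(G/\Ga)$ is finite; on the other hand $\vol(G/(\Ga\cap\Ga^{g}))=[\Ga^{g}:\Ga\cap\Ga^{g}]\cdot\vol(G/\Ga^{g})$ and $\vol(G/\Ga^{g})=\vol(G/\Ga)>0$ by unimodularity of $G=\Aut(T)$ for a uniform tree, forcing $[\Ga^{g}:\Ga\cap\Ga^{g}]<\infty$ (this is the argument already used in the proof of Lemma~\ref{lem:chi.product}(1)). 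Putting this together with the previous paragraph gives the claimed bijection between $\Ga\bs\Comm_{n}(\Ga,G)$ and the collection of covering maps $T\to Y$ whose $\Ga$-orbit has size $n$.
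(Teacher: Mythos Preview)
Your proof is correct and follows essentially the same approach as the paper: compute the stabilizer of $\Ga g$ under the right $\Ga$-action as $\Ga\cap\Ga^{g}$, deduce that the orbit size is $[\Ga:\Ga\cap\Ga^{g}]$, and then use the lattice hypothesis (via the argument of Lemma~\ref{lem:chi.product}(1)) to conclude that finiteness of this index forces $g\in\Comm_{n}(\Ga,G)$. The paper simply cites Lemma~\ref{lem:chi.product}(1) directly rather than re-deriving it, and omits the routine verification of left $\Ga$-invariance that you spell out, but the logic is the same.
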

\begin{proof}
$\Ga$ acts on the right on the cosets $\Ga g$ of $\Ga\bs G$. The stabilizer of $\Ga g$ under this action is $\Stab_{\Ga}(\Ga g)= \Ga \cap \Ga^g$. Thus, the $\Ga$-orbit of $\Ga g$ has cardinality $[\Ga: \Ga \cap \Ga^g]$.
Assume now that $[\Ga: \Ga \cap \Ga^g]=n < \infty$. As $\Ga$ is a lattice, by Lemma \ref{lem:chi.product}(1) $[\Ga: \Ga \cap \Ga^g]=[\Ga^g: \Ga \cap \Ga^g]$, hence $g\in \Comm_n(\Ga,G)$. Thus, $\Ga\bs\Comm_n$ is the set of cosets whose $\Ga$-orbit has size $n$, and by Lemma \ref{lem:phi} the assertion follows.
\end{proof}

We now find another realization of this set.
Given a cover $\sigma:T\to Y$, whose stabilizer $\De=\Stab_\Ga(\sigma)$ has finite index in $\Ga$, we get two covering maps from $\De\bs T$ to $Y$. One is the map induced by the inclusion $\De\subset\Ga$ (i.e. by the canonical projection), and the other is the map induced by $\sigma$. The graph $\De\bs T$ has a distinguished vertex---the image of $t_0$ under the quotient---and the first of the two covering maps sends this vertex to $y_0$.

\begin{defn} \begin{enumerate}
\item A quadruple $(X,x_0,f_1,f_2)$, where $X$ is a finite graph, $x_0$ is a vertex in $X$,  and $f_1,f_2:X\to Y$ are covering maps, such that $f_1(x_0)=y_0$, is called a twin cover (of the pair $(Y,y_0)$). The common degree of the covers (which is equal to $|X|/|Y|$) is called the degree of the twin cover.
\item A morphism between two twin covers $(X,x_0,f_1,f_2)$ and $(Z,z_0,h_1,h_2)$ is a covering $\phi:X\to Z$ such that $\phi(x_0)=z_0$, $f_1=h_1\circ\phi$, and $f_2=h_2\circ\phi$. If the degree of $\phi$ is not $|X|/|Y|$ we say that
$(X,x_0,f_1,f_2)$ factors through $(Z,z_0,h_1,h_2)$ via $\phi$.
\item A twin cover $(X,x_0,f_1,f_2)$ is called minimal if any morphism from it is either an isomorphism, or of degree $|X|/|Y|$.
\end{enumerate}
\end{defn}

By the last paragraph, we get a map $F$ from $\Ga\bs\Comm(G,\Ga)$ to the collection of twin covers of $(Y,y_0)$.

\begin{prop}\label{prop:Fbijection}
\begin{enumerate}
\item The image of $F$ is contained in the collection of minimal twin covers of $(Y,y_0)$.
\item $F$ gives a bijection between $\Ga\bs\Comm_n(G,\Ga)$ and isomorphism classes of minimal twin covers of degree $n$ of $(Y,y_0)$.
\end{enumerate}
\end{prop}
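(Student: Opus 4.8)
The plan is to build an explicit inverse of $F$ on each $\Ga\bs\Comm_n(G,\Ga)$ from the standard dictionary between connected covers of a graph and subgroups of its fundamental group, reading minimality and the index $n$ off the defining properties of twin covers.

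For part (1): given $\Ga g\in\Comm_n(G,\Ga)$, put $\sigma=\pi\circ g$ and $\De=\Stab_\Ga(\sigma)=\Ga\cap\Ga^g$, so that, by Lemma~\ref{lem:phi*} and the construction preceding the definition, $F(\Ga g)=(\De\bs T,x_0,f_1,f_2)$ is a twin cover of degree $[\Ga:\De]=\chi(g)=n$ (here $f_1$ is the canonical projection $\De\bs T\to\Ga\bs T=Y$ and $f_2$ is induced by $\sigma$). Let $\phi$ be a morphism out of it, with target $(Z,z_0,h_1,h_2)$. Since $h_1\circ\phi=f_1$ and all graphs are connected, $Z$ is an intermediate cover, so $Z=\De''\bs T$ for some $\De\subseteq\De''\subseteq\Ga$ and $\phi$ is the canonical projection; since $h_2\circ\phi=f_2$, the cover $\sigma$ descends through $\De''\bs T$, i.e.\ is $\De''$-invariant, which (unwinding conjugation) says $\De''\subseteq\Ga^g$. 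Hence $\De''\subseteq\Ga\cap\Ga^g=\De$, so $\De''=\De$ and $\phi$ is an isomorphism: $F(\Ga g)$ admits only isomorphisms as morphisms, and in particular is minimal.

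Injectivity follows in the same spirit: an isomorphism of twin covers $F(\Ga g)\cong F(\Ga g')$ lifts to an automorphism of $T$ fixing $t_0$, which by compatibility with the $f_1$'s lies in $\Ga$, hence is trivial because $\Ga$ is torsion-free and therefore acts freely on $T$; compatibility with the $f_2$'s then forces $\pi\circ g=\pi\circ g'$, i.e.\ $\Ga g=\Ga g'$. For surjectivity, start from a minimal twin cover $(X,x_0,f_1,f_2)$ of degree $n$; its universal cover is $T$ (as $X$ covers $Y$), so choose a universal covering $q\colon T\to X$ and, using uniqueness of the pointed universal cover, normalise it so that $q(t_0)=x_0$ and $f_1\circ q=\pi$. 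Then the deck group $\De'$ of $q$ lies in $\Ga$ with $[\Ga:\De']=\deg f_1=n$, and $X=\De'\bs T$. The covering map $f_2\circ q\colon T\to Y$ equals $\pi\circ g$ for a unique coset $\Ga g$ (Lemma~\ref{lem:phi}); I claim $F(\Ga g)\cong(X,x_0,f_1,f_2)$, for which it suffices to see $\Stab_\Ga(\pi\circ g)=\De'$. One inclusion is clear: $q\circ\delta'=q$ gives $\De'\subseteq\Stab_\Ga(\pi\circ g)=\Ga\cap\Ga^g$. If the inclusion were proper, any $\De''$ with $\De'\subsetneq\De''\subseteq\Ga\cap\Ga^g$ would yield, via the projection $X=\De'\bs T\to\De''\bs T$ and the two maps $\De''\bs T\to Y$ induced by $\De''\subseteq\Ga$ and by $\De''\subseteq\Ga^g$, a morphism out of $(X,x_0,f_1,f_2)$ that is not an isomorphism, contradicting minimality. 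Hence $\De'=\Ga\cap\Ga^g$, so $\chi(g)=n$ and $\Ga g\in\Comm_n(G,\Ga)$, and unwinding the definitions gives $F(\Ga g)\cong(X,x_0,f_1,f_2)$.

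The delicate point is the last surjectivity argument, and within it the verification that the projection $X\to\De''\bs T$ genuinely assembles into a morphism of twin covers — correct basepoints, and the two maps $\De''\bs T\to Y$ being honest coverings compatible with $\phi$ — so that minimality can be invoked; I also expect the normalisation $f_1\circ q=\pi$ (and the argument that it determines $\De'\subseteq\Ga$ unambiguously) to need a careful word. Everything else is routine covering-space bookkeeping with the subgroup/cover dictionary.
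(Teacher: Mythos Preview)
Your proof is correct and follows essentially the same route as the paper: both use the covering-space dictionary to identify intermediate covers with subgroups between $\De$ and $\Ga$, deduce minimality from $\De=\Stab_\Ga(\sigma)$, and construct the inverse by lifting $f_1$ to the normalised universal cover $q$ and reading off $\sigma=f_2\circ q$. The only organisational difference is that the paper packages the bijection as an explicit inverse map $M$ and verifies $M\circ F=\mathrm{Id}$ and $F\circ M=\mathrm{Id}$, whereas you prove injectivity directly (via the free action of $\Ga$ on $T$ forcing the lifted isomorphism to be the identity) and surjectivity separately; the minimality argument pinning down $\Stab_\Ga(\sigma)=\De'$ is identical in both.
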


\begin{proof} \begin{enumerate}
\item If $\sigma:T\to Y$ is a cover, and $F(\sigma)=(X,x_0,f_1,f_2)$ factors through $(Z,z_0,h_1,h_2)$ via $\phi$, then the stabilizer of $\sigma$ contains $\pi_1(Z,z_0)\supset\pi_1(X,x_0)$, so $\pi_1(Z,z_0)=\pi_1(X,x_0)$, and $\phi$ must be an isomorphism.
\item The inverse map is constructed as follows: Given a twin cover $(X,x_0,f_1,f_2)$,
let $\tilde{X}$ be the universal cover of $X$; it is isomorphic to $T$.
Thus, there is a unique covering map $\xi:T\to X$ such that $\xi(t_0)=x_0$ and $f_1\circ \xi=\pi:T\to Y$. Define $\sigma:T\to Y$ to be the composition $f_2\circ \xi$.

We check that this map $M$ is indeed the inverse of $F$. To see that $M$ is well defined we need to check two things. First, if $(Z,z_0,h_1,h_2)$ is isomorphic to
$(X,x_0,f_1,f_2)$ via $\phi$ and $\mu: T\to Z$ is the unique cover such that $\mu(t_0)=z_0$ and $h_1\circ \mu=\pi$, we need to show that $f_2\circ \xi = h_2\circ \mu$.
Indeed $f_2\circ \xi = h_2\circ \phi \circ \xi = h_2\circ \mu$, where the first equality follows from the definition of $\phi$ and the second equality follows
from the uniqueness of $\mu$.
Second, we need to show that $\sigma$ has $\Ga$-orbit of size $|X|/|Y|$.
Let $\De\subset \pi_1(Y,y_0)$ be the image of $\pi_1(X,x_0)$ by $f_1$.
By the minimality of $(X,x_0,f_1,f_2)$, the $\Ga$-orbit of $\sigma$ has size  $|\De\bs T|/|\Ga\bs T|=|X|/|Y|$.
That $M\circ F=Id$ is clear. To see that $F\circ M=Id$ notice that by well-definedness of $M$, $\De=\Stab_{\Ga}(\sigma)$ and for $Z=\Stab_{\Ga}(\sigma)\bs T$, $z_0=\pi_Z(t_0)$ one gets an isomorphism $\phi:(X,x_0,f_1,f_2)\to (Z,z_0,i_*,\sigma_*)$.
\end{enumerate}
\end{proof}

Now we want to count commensurizer growth by counting the number of isomorphism classes of minimal twin covers of degree $n$ of $(Y,y_0)$, in order to prove Theorem \ref{thm:upper bound uniform lattice}.

Assume that in the uniform tree $T$ each vertex has degree $\geq 3$.
Let $c(n)=|\G\bs\Cnnp|$. Recall that any uniform tree lattice is of finite index in its normalizer, hence a good estimate on $c(n)$ will provide a good estimate on $c_{\leq n}(\Gamma,G)$.
Further, we will see that a good estimate on $c_{\leq n}(\Gamma',G)$, where $\Ga' \subset G$ is a torsion free uniform lattice
will
provide a good estimate on $c_{\leq n}(\Gamma,G)$ for \emph{any} uniform lattice $\Ga$ of $T$, thanks to the following result by Bass and Kulkarni \cite{BaKu}.
\begin{theo}\cite{BaKu}\label{thm:BassKulkarniTreeLattices}
Any two uniform lattices in $G=\Aut(T)$ are commensurable after conjugation.
\end{theo}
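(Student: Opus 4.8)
The plan is to reduce to the torsion-free case and then apply Leighton's graph-covering theorem. A uniform lattice $\Ga$ in $G=\Aut(T)$ is finitely generated (it acts cocompactly on the tree $T$) and acts on $T$ with finite vertex stabilizers, so by Bass--Serre theory it is the fundamental group of a finite graph of finite groups; in particular $\Ga$ is virtually free. Hence $\Ga$ contains a finite-index free subgroup $\Ga^\circ$, and since $\Ga^\circ$ is torsion-free every nontrivial element of it acts on $T$ without fixing a vertex (a fixed vertex would place the element in a finite stabilizer). Thus $\Ga^\circ$ acts freely and cocompactly on $T$, so $X:=\Ga^\circ\bs T$ is a finite graph with universal cover $T$ and $\pi_1(X)\cong\Ga^\circ$. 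Because being ``commensurable after conjugation'' depends only on passing to finite-index subgroups (if $g\La_1 g^{-1}=\La_2$ with $\La_i$ of finite index in $\Ga_i$, then $g\Ga_1 g^{-1}$ and $\Ga_2$ share the finite-index subgroup $\La_2$), it suffices to prove the theorem for two torsion-free uniform lattices $\Ga_1,\Ga_2$, with $X_i:=\Ga_i\bs T$ finite graphs whose common universal cover is $T$.

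Next I would invoke Leighton's theorem: two finite connected graphs with a common universal cover admit a common finite cover. Applied to $X_1$ and $X_2$ (both with universal cover $T$) this produces a finite graph $Z$ together with coverings $Z\to X_1$ and $Z\to X_2$. The first covering realizes $\La_1:=\pi_1(Z)$ as a finite-index subgroup of $\Ga_1=\pi_1(X_1)$ which acts freely and cocompactly on $T$ with $\La_1\bs T\cong Z$; the second realizes $\La_2:=\pi_1(Z)$ as a finite-index subgroup of $\Ga_2$ with $\La_2\bs T\cong Z$.

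To finish, I would conjugate one copy onto the other. Fix a graph isomorphism $\theta\colon\La_1\bs T\to\La_2\bs T$, and let $p_i\colon T\to\La_i\bs T$ be the quotient maps, which are universal covering maps. Then $\theta\circ p_1$ is again a universal cover of $\La_2\bs T$, so there is $g\in\Aut(T)$ with $p_2\circ g=\theta\circ p_1$. A direct check shows that for $\la\in\La_1$ the automorphism $g\la g^{-1}$ satisfies $p_2\circ(g\la g^{-1})=p_2$, i.e. it is a deck transformation of $p_2$, whence $g\la g^{-1}\in\La_2$; the symmetric argument with $\theta^{-1}$ and $g^{-1}$ gives $g\La_1 g^{-1}=\La_2$. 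Consequently $g\Ga_1 g^{-1}$ and $\Ga_2$ both contain the finite-index subgroup $\La_2=g\La_1 g^{-1}$, so they are commensurable; that is, $\Ga_1$ and $\Ga_2$ are commensurable after conjugation.

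The main obstacle is Leighton's theorem itself, which is the substantial ingredient here. If one preferred to avoid the torsion-free reduction and argue directly with the quotient graphs of groups $\Ga_1\bs\bs T$ and $\Ga_2\bs\bs T$, one would instead need the graph-of-groups generalization of Leighton's theorem, which is the technical core of the Bass--Kulkarni argument; the reduction sketched above trades this for the (easier) fact that uniform tree lattices are virtually free.
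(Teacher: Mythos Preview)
The paper does not supply a proof of this theorem; it is quoted verbatim from Bass--Kulkarni \cite{BaKu} and used as a black box in the proof of Theorem~\ref{thm:exact_bound}. So there is no ``paper's own proof'' to compare your proposal against.

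That said, your outline is a valid route to the result. The reduction to torsion-free finite-index subgroups via Bass--Serre theory (finite graph of finite groups, hence virtually free) is standard; once both lattices are replaced by free subgroups, the quotients $X_i=\Ga_i\bs T$ are finite graphs with common universal cover $T$, Leighton's theorem yields a common finite cover $Z$, and lifting an isomorphism $\La_1\bs T\cong Z\cong\La_2\bs T$ to $T$ conjugates the deck groups. One small point you glossed over: ``no fixed vertex'' is not literally the same as ``free action'' because of edge inversions, but an inversion $g$ has $g^2$ in a (finite) vertex stabilizer, so torsion-freeness rules those out too. As you note yourself, the substantial input is Leighton's theorem; Bass--Kulkarni's original argument instead proves a graph-of-groups generalization of Leighton directly, without first passing to torsion-free subgroups, which is heavier but self-contained.
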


By Proposition~\ref{prop:Fbijection}, $c(n)$ is the number of isomorphism classes of minimal twin covers of $(Y,y_0)$ of degree $\leq n$.

\begin{proof}[Proof of Theorem \ref{thm:exact_bound}]
Let $\Ga'$ be a torsion free uniform lattice in $G=\Aut(T)$, $Y=\Ga'\bs T$ and $y_0=\pi(t_0)$ for the canonical projection $\pi: T\to \Ga'\bs T$.
Let $c_v, c_e$ be the numbers of vertices and edges in $Y$, respectively.
As  we count up to isomorphism classes, we can fix $VX=\{x_0=1,2,...,m c_v\}$ to be the vertex set for all covers $X$ of degree $m$ which we consider (in particular, $m\leq n$).

The number of base point preserving coverings of $(Y,y_0)$ of degree $m$ (call it pointed $m$-cover for short) equals the number of index $m$ subgroups of the fundamental group $\pi_1(Y,y_0)$.
This will count the number of possible $f_1$ in $(X,x_0,f_1,f_2)$.
Now $(Y,y_0)$ is homotopic to a bouquet of $k=k(\G')$ loops with $k\geq 3$ (it follows from the fact that each vertex in $T$, hence also in $Y$, has degree $\geq 3$), hence $\pi_1(Y,y_0)$ is isomorphic to the free group on $k$ generators, $F_k$.
Asymptotically, the number of index $m$ subgroups of $F_k$ is $m(m !)^{k-1}$, see e.g. \cite[Theorem 2.1]{SubgroupGrowthBook}.

First we prove the upper bound.
As a pointed covering, the base point $x$ in $f_2:(X,x)\to(Y,y_0)$ can take at most $mc_v$ values, and we use it to compute an upper bound on the number of possible $f_2$.
The discussion above yield that the number of twin covers (not necessarily minimal) of degree $m$ of $(Y,y_0)$ is
$\leq (1+\epsilon) m c_v (m(m !)^{k-1})^2$ (for any fixed $\epsilon>0$ and large enough $m$).

Thus, summing over all $1\leq m\leq n$ we obtain the upper bound
$$c(n) \leq n (1+\epsilon) n c_v (n(n !)^{k-1})^2 \leq 2^{c(\G')n \rm{lg}(n)}$$ for the  constant $c(\G')=2(k-1)(1+\epsilon)>0$. This proves the upper bound for torsion free lattices.

Let $\G$ be a uniform lattice.
By Theorem \ref{thm:BassKulkarniTreeLattices}, $\G$ is commensurable after conjugation to $\G'$, i.e. there is $g\in G$ such that $[\G': \G^g \cap \G']=t_1$ and $[\G^g: \G^g \cap \G']=t_2$ where $t_1,t_2$ are finite.
Clearly $|\G\bs\Cnn|=|\G^g\bs\Cnng|$ for every $n$.

By Proposition \ref{prop:IndexGapForCommensurales} we get

\begin{align*}
|\G\bs\Cnn|&\leq |(\G^g\cap \G')\bs\mathrm{Comm}_{\leq t_2n}(\Gamma^g \cap \G')| \\
&\leq |\G'\bs\mathrm{Comm}_{\leq t_1t_2n}(\G')|\leq 2^{(1+\epsilon)c(\G') t_1t_2 n\log(n)}
\end{align*}
(for any fixed $\epsilon>0$ and large enough $n$).
Hence $c_1(\G)=(1+\epsilon)c(\G')t_1t_2$ is a suitable constant.

Next, we prove the lower bound.
Choose an odd prime $p$ such that $\frac{n}{2}\leq p \leq n$, and use the estimate $c(n)\geq c(p)-c(p-1) := c'(p)$. Note that as $p$ is prime, any twin cover of degree $p$ is minimal.
We under-count the number of twin covers of degree $p$ by merely counting those of the form $(X,x_0,f,f_2)$, where $f$ is fixed and $f_2(x_0)=y_0$, and dividing by an upper bound on $|\Aut(X)|$, which is obviously an upper bound for the size of the isomorphism classes.

Let us estimate $|\Aut(X)|$. As $T$ is uniform, all vertex degrees in T are smaller than some constant $b$, hence also in $X$ (as $T\to X$ is a covering).  An automorphism $\phi$ of X sends a spanning tree to a spanning tree. $X$ has $pc_v$ vertices, thus the number of labeled spanning trees is at most $pc_v b^{pc_v-1}$, as after mapping $x_0$ somewhere, a neighbor of $x_0$ has at most $b$ options where to be mapped, and its neighbor has at most $b-1$ options, etc. Hence $|\Aut(X)|\leq pc_v b^{pc_v-1}$.

Thus,
$$c'(p)\geq (1-\epsilon')p(p !)^{k-1}\frac{1}{pc_v b^{pc_v-1}}\geq 2^{(1-\epsilon)(k-1)p\rm{lg}(p)}$$
(for any fixed $\epsilon>0$ and large enough $n$, hence large enough $p$).
Hence,
$c(n)\geq 2^{c'(\G')n \rm{lg}(n)}$
 where
$c'(\G')= \frac{(1-\epsilon)}{2}(k-1) >0$.
This proves the lower bound for torsion free lattices.

Let $\G$ be a uniform lattice and $t_1,t_2$ as before.
Then by Proposition \ref{prop:IndexGapForCommensurales}
$$|\G \backslash \Cnn|\geq 2^{c'(\G')\frac{1}{t_1t_2} n\lg(n)},$$
and as $[N_G\G : \G]<\infty$, $c_2(\G)=c'(\G')\frac{1}{t_1t_2} >0$ is a suitable constant.
\end{proof}



\section{Arbitrary growth}\label{sec:arbitraryGrowth}
\begin{lem}\label{lem:ArbitraryGrowth}
Let $G=\GL_n(\bF_p)\ltimes\bF_p^n$ and $\bF_p\cong A\subset G$ be the subgroup of elements of the form $((*,0,\ldots,0),Id)$. Then
\[
Z_{G,A}(s)=1+\left(\frac{p^n-1}{p-1}-1\right)p^{-s}.
\]
\end{lem}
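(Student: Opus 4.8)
The plan is to compute the zeta function $Z_{G,A}(s) = \sum_{g \in N_G(A)\backslash \Comm(A,G)} \chi_{A,G}(g)^{-s}$ directly, by first understanding the normalizer $N_G(A)$, then the commensurizer $\Comm(A,G)$, and finally the index $\chi_{A,G}(g) = [A : A \cap A^g]$ for each relevant coset. Since $A \cong \bF_p$ is a finite group of prime order $p$, its only subgroups are the trivial group and $A$ itself; hence for any $g \in G$, either $A \cap A^g = A$ (so $g$ normalizes $A$ and $\chi(g) = 1$) or $A \cap A^g = \{1\}$ (so $\chi(g) = p$). This means $\Comm(A,G) = G$ and $\chi_{A,G}$ takes only the two values $1$ and $p$, so the zeta function has the shape $1 + c_p \cdot p^{-s}$ where $c_p = |N_G(A)\backslash (G \setminus N_G(A))| = |N_G(A)\backslash G| - 1$. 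So the whole problem reduces to computing the index $[G : N_G(A)]$.

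To compute $N_G(A)$, I would write elements of $G = \GL_n(\bF_p) \ltimes \bF_p^n$ as pairs $(M, v)$ with the convention matching the semidirect product structure, and compute the conjugate $A^{(M,v)}$. Writing $e_1 = (1,0,\ldots,0)$, the subgroup $A$ consists of the pairs $(I, t e_1)$ for $t \in \bF_p$ (I should double-check against the paper's stated form $((*,0,\ldots,0),Id)$ — the translation part is $*e_1$ and the linear part is the identity). Conjugating a translation $(I, te_1)$ by $(M,v)$ yields a translation by $\pm M^{\mp 1}(te_1)$ (the exact sign/inverse depending on the multiplication convention), so $A^{(M,v)}$ is the group of translations in the direction of the line $\bF_p \cdot M^{-1}e_1$ (independent of $v$). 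Thus $(M,v)$ normalizes $A$ if and only if $M^{-1}e_1 \in \bF_p e_1$, i.e. $e_1$ is an eigenvector of $M$. The set of such $M$ is the stabilizer in $\GL_n(\bF_p)$ of the line $\bF_p e_1 \in \mathbb{P}^{n-1}(\bF_p)$, which is a maximal parabolic subgroup; its index equals the number of lines in $\bF_p^n$, namely $\frac{p^n-1}{p-1}$.

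Therefore $[G : N_G(A)] = [\GL_n(\bF_p) : P] = \frac{p^n-1}{p-1}$, where $P$ is the line-stabilizer (the translation part $\bF_p^n$ lies entirely in $N_G(A)$ and contributes nothing to the index). Hence $c_1(G,A) = 1$ and $c_p(G,A) = \frac{p^n-1}{p-1} - 1$, while $c_m(G,A) = 0$ for all other $m$, giving exactly
\[
Z_{G,A}(s) = 1 + \left(\frac{p^n-1}{p-1} - 1\right) p^{-s}.
\]
I expect the only real subtlety — not so much an obstacle as a bookkeeping point to get right — is the conjugation convention in the semidirect product: one must verify carefully that conjugating $A$ by $(M,v)$ does not depend on $v$ and produces translations along the line $M^{-1}\bF_p e_1$, so that the normalizer is genuinely the full preimage of the parabolic $P$ under the projection $G \to \GL_n(\bF_p)$. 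Everything else is a direct count.
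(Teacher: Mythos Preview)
Your proposal is correct and follows essentially the same approach as the paper: identify the normalizer of $A$ as $B\ltimes\bF_p^n$ where $B$ is the stabilizer in $\GL_n(\bF_p)$ of the line $\bF_p e_1$ (of index $\frac{p^n-1}{p-1}$), and observe that since $|A|=p$ is prime, any $g$ outside the normalizer gives $A\cap A^g=\{1\}$ and hence $\chi(g)=p$. Your write-up is in fact more detailed than the paper's terse proof, and your cautionary remark about the semidirect-product conjugation convention is well placed but causes no trouble.
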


\begin{proof} The normalizer of $A$ in $G$ is $B\ltimes\bF_p^n$, where $B$ is the subgroup of $\GL_n(\bF_p)$ that stabilizes the line $(*,0,\ldots,0)$. Since the index of $B$ in $\GL_{n}(\bF_p)$ is $\frac{p^n-1}{p-1}$, and since for every element $g$ that is not in the normalizer $A^g\cap A$ is trivial, the result follows.
\end{proof}

The following proposition proves Theorem \ref{thm:manyGrowthFunc}.
\begin{prop} Let $A$ be the pro-cyclic group $\prod_p\bF_p$. For every function $f:\bN\to\bN$ there is a group $G$ containing $A$ such that $c_{\leq n}(A,G)\geq f(n)$, for every $n \in \bN$.
\end{prop}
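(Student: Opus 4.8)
The plan is to assemble $G$ as a restricted direct product of the groups supplied by Lemma~\ref{lem:ArbitraryGrowth}, one for each prime, with the dimension parameter chosen to grow fast along the primes. For a prime $p$, write $A_p\cong\bF_p$ for the subgroup $\{((*,0,\ldots,0),\mathrm{Id})\}$ of $G_p:=\GL_{n_p}(\bF_p)\ltimes\bF_p^{n_p}$, where the integers $n_p\geq 2$ will be specified below. Set $G=\prod_p^{'}G_p$, the restricted product of the $G_p$ relative to the $A_p$ (i.e.\ tuples all but finitely many of whose coordinates lie in $A_p$); this is a group, it contains $A=\prod_p A_p$, and $\prod_p A_p$ is exactly the pro-cyclic group $\prod_p\bF_p$.

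By Lemma~\ref{lem:ArbitraryGrowth}, $Z_{A_p,G_p}(s)=1+\bigl(\tfrac{p^{n_p}-1}{p-1}-1\bigr)p^{-s}$; equivalently $c_1(A_p,G_p)=1$, $c_p(A_p,G_p)=\tfrac{p^{n_p}-1}{p-1}-1$, and $c_m(A_p,G_p)=0$ for every other $m$. Applying the zeta-function form of Lemma~\ref{lem:Comm.prod}(2), namely $Z_{\prod A_i,\prod' G_i}(s)=\prod_i Z_{A_i,G_i}(s)$, we get
\[
Z_{A,G}(s)=\prod_p\left(1+\Bigl(\tfrac{p^{n_p}-1}{p-1}-1\Bigr)p^{-s}\right),
\]
and comparing coefficients of $m^{-s}$ shows that $c_m(A,G)=\prod_{p\mid m}\bigl(\tfrac{p^{n_p}-1}{p-1}-1\bigr)$ when $m$ is squarefree and $c_m(A,G)=0$ otherwise. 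In particular $c_p(A,G)=\tfrac{p^{n_p}-1}{p-1}-1$ for every prime $p$, so $c_{\leq n}(A,G)\geq c_p(A,G)$ whenever $p\leq n$.

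It remains to choose the sequence $(n_p)$. Enumerate the primes $p_1<p_2<\cdots$, and given $f$, pick $n_{p_k}$ large enough that $\tfrac{p_k^{n_{p_k}}-1}{p_k-1}-1\geq\max\{f(m):p_k\leq m<p_{k+1}\}$, which is possible since the maximum is over a finite set. Then for any $n$ with $p_k\leq n<p_{k+1}$ one has $c_{\leq n}(A,G)\geq c_{p_k}(A,G)\geq f(n)$, establishing the bound for all $n\geq 2$ (the cases $n\leq 1$ being immediate, as $c_{\leq 1}(A,G)=c_1(A,G)=1$ for any pair). The only steps needing care are bookkeeping rather than genuine obstacles: one must form the restricted product relative to precisely the subgroups $A_p$ so that $A=\prod_p A_p$ and Lemma~\ref{lem:Comm.prod}(2) applies verbatim; and, for the last step, the key device is that instead of trying to control the (multiplicative) cross terms $c_m(A,G)$ for composite $m$ — which only help — one diagonalizes over all $n$ simultaneously by dedicating, to each block $[p_k,p_{k+1})$ of integers $n$, the single new prime $p_k$ and letting $n_{p_k}$ absorb the finitely many values of $f$ on that block.
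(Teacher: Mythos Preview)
Your proof is correct and follows the same strategy as the paper's: take $G$ to be the restricted product of the groups from Lemma~\ref{lem:ArbitraryGrowth} with the dimensions $n_p$ chosen large, then use the product formula to read off $c_p(A,G)$ and dominate $f$. The only difference is cosmetic---the paper covers each $n$ by a prime $p\in[n/2,n]$ via Bertrand's postulate while you use the largest prime $p_k\leq n$; your parenthetical that the case $n\leq 1$ is ``immediate'' because $c_{\leq 1}=1$ is a slip (this does not give $c_{\leq 1}\geq f(1)$ for arbitrary $f$), but the paper's argument has the same lacuna at $n=1$.
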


\begin{proof} Choose a sequence $n_p$, indexed by the primes, such that $f(p+i)<\frac{p^{n_p}-1}{p-1}$ for every $0 \leq i \leq p$, and define $G$ to be the restricted product of $\GL_{n_p}(\bF_p)\ltimes\bF_p^{n_p}$ relative to $\bF_p$, for every prime $p$. For every $n \in \bN$, there exists a prime $p$ such that $n/2 \leq p \leq n$, thus we have
\[
c_{\leq n}(A,G) \geq c_{\leq p}(A,G)\geq c_{p}(A,G)\geq\frac{p^{n_p}-1}{p-1}-1\geq f(n)
\]
by the choice of the sequence $(n_p)$ and Lemma \ref{lem:ArbitraryGrowth}.
\end{proof}

\vspace{0.1 in}
{\it{Acknowledgement}}
We thank Alex Lubotzky and Shahar Mozes for helpful discussions.

\bibliographystyle{amsplain}
\bibliography{seonhee}
\end{document}